\theoremstyle{plain}
\newtheorem{theorem}{Theorem}[section]
\newtheorem{conj}[theorem]{Conjecture}
\newtheorem{lemma}[theorem]{Lemma}
\newtheorem{proposition}[theorem]{Proposition}
\begin{document}

\title{Edge-Disjoint Spanning Trees and Eigenvalues of Regular Graphs}
\author{Sebastian M. Cioab\u{a}\footnote{Department of Mathematical Sciences, University of Delaware, Newark, DE 19716-2553, {\tt cioaba@math.udel.edu}.
This work was partially supported by a grant from the Simons Foundation ($\#209309$ to Sebastian M. Cioab\u{a}).} \, and Wiseley Wong\footnote {Department of Mathematical Sciences, University of Delaware, Newark, DE 19716-2553, {\tt wwong@math.udel.edu}.}\\
MSC: 05C50, 15A18, 05C42, 15A42}
\date{March 12, 2012}
\maketitle

\begin{abstract}
Partially answering a question of Paul Seymour, we obtain a sufficient eigenvalue condition for the existence of $k$ edge-disjoint spanning trees in a regular graph, when $k\in \{2,3\}$. More precisely, we show that if the second largest eigenvalue of a $d$-regular graph $G$ is less than $d-\frac{2k-1}{d+1}$, then $G$ contains at least $k$ edge-disjoint spanning trees, when $k\in \{2,3\}$. We construct examples of graphs that show our bounds are essentially best possible. We conjecture that the above statement is true for any $k<d/2$.
\end{abstract}

\section{Introduction}

Our graph notation is standard (see West \cite{West} for undefined terms). The adjacency matrix of a graph $G$ with $n$ vertices has its rows and columns indexed after the vertices of $G$ and the  $(u,v)$-entry of $A$ is $1$ if $uv=\{u,v\}$ is an edge of $G$
and $0$ otherwise. If $G$ is undirected, then $A$ is symmetric. Therefore, its eigenvalues
are real numbers, and we order them as $\lambda_1\geq
\lambda_2\geq \dots \geq \lambda_n$. The Laplacian matrix $L$ of $G$ equals $D-A$, where $D$ is the diagonal degree matrix of $G$. The Laplacian matrix is positive semidefinite and we order its eigenvalues as $0=\mu_1\leq \mu_2\leq \dots \leq \mu_n$. It is well known that if $G$ is connected and $d$-regular, then $\mu_i=d-\lambda_i$ for each $1\leq i\leq n$, $\lambda_1=d$ and $\lambda_i<d$ for any $i\neq 1$ (see \cite{BH,GR}).

Kirchhoff Matrix Tree Theorem \cite{Kir1847} (see \cite[Section 1.3.5]{BH} or \cite[Section 13.2]{GR} for short proofs) is one of the classical results of combinatorics. It states that the number of spanning trees of a graph $G$ with $n$ vertices is the principal minor of the Laplacian matrix $L$ of the graph and consequently, equals $\frac{\prod_{i=2}^{n}\mu_i}{n}$. In particular, if $G$ is a $d$-regular graph, then the number of spanning trees of $G$ is $\frac{\prod_{i=2}^{n}(d-\lambda_i)}{n}$.

Motivated by these facts and by a question of Seymour \cite{Seymour}, in this paper, we find relations between the maximum number of edge-disjoint spanning trees (also called the spanning tree packing number or tree packing number; see Palmer \cite{Pa} for a survey of this parameter) and the eigenvalues of a regular graph. Let $\sigma(G)$ denote the maximum number of edge-disjoint spanning trees of $G$. Obviously, $G$ is connected if and only if $\sigma(G)\geq 1$. 

A classical result, due to Nash-Williams \cite{NW} and independently, Tutte \cite{Tutte} (see \cite{K} for a recent short constructive proof), states that a graph $G$ contains $k$ edge-disjoint spanning trees if and only if for any partition of its vertex set $V(G)=X_1\cup \dots \cup X_t$ into $t$ non-empty subsets, the following condition is satisfied:
\begin{equation}\label{NWT}
\sum_{1\leq i<j\leq t}e(X_i,X_j)\geq k(t-1)
\end{equation}

A simple consequence of Nash-Williams/Tutte Theorem is that if $G$ is a $2k$-edge-connected graph, then $\sigma(G)\geq k$ (see Kundu \cite{Ku}). Catlin \cite{Ca} (see also \cite{CLS}) improved this result and showed that a graph $G$ is $2k$-edge-connected if and only if the graph obtained from removing any $k$ edges from $G$ contains at least $k$ edge-disjoint spanning trees.

An obvious attempt to find relations between $\sigma(G)$ and the eigenvalues of $G$ is by using the relations between eigenvalues and edge-connectivity of a regular graph as well as the previous observations relating the edge-connectivity to $\sigma(G)$. Cioab\u{a} \cite{Ci} has proven that if $G$ is a $d$-regular graph and $2\leq r\leq d$ is an integer such that $\lambda_2<d-\frac{2(r-1)}{d+1}$, then $G$ is $r$-edge-connected. While not mentioned in \cite{Ci}, it can be shown that the upper bound above is essentially best possible. An obvious consequence of these facts is that if $G$ is a $d$-regular graph with $\lambda_2<d-\frac{2(2k-1)}{d+1}$ for some integer $k$, $2\leq k\leq \lfloor \frac{d}{2} \rfloor$, then $G$ is $2k$-edge-connected and consequently, $G$ contains $k$-edge-disjoint spanning trees. 

In this paper, we improve the bound above as follows. 
\begin{theorem}\label{sigma2}
If $d\geq 4$ is an integer and $G$ is a $d$-regular graph such that $\lambda_2(G)<d-\frac{3}{d+1}$, then $G$ contains at least $2$ edge-disjoint spanning trees.
\end{theorem}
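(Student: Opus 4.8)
The plan is to argue by contradiction, combining the Nash--Williams/Tutte criterion \eqref{NWT} with an eigenvalue lower bound obtained from test vectors that are constant on the parts of an extremal partition. Suppose $G$ has no two edge-disjoint spanning trees. By \eqref{NWT} with $k=2$ there is a partition $V(G)=X_1\cup\dots\cup X_t$ with $\sum_{i<j}e(X_i,X_j)\le 2(t-1)-1=2t-3$; choose one with $t$ minimum. Since $\lambda_2(G)<d-\frac{3}{d+1}<d-\frac{2}{d+1}$, Cioab\u{a}'s edge-connectivity bound \cite{Ci} (with $r=2$) shows $G$ is $2$-edge-connected, so every part sends at least two edges to the rest; in particular $t\ge 3$. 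Minimality of $t$ gives two further facts: no two parts are joined by two or more edges (else merge them to get a counterexample with $t-1$ parts), and $\sum_{i<j}e(X_i,X_j)=2t-3$ exactly (if it were at most $2t-4$, merging any two adjacent parts --- there is such a pair since $G$ is connected --- would again contradict minimality). Hence the quotient graph $H$, with vertex set $\{X_1,\dots,X_t\}$ and an edge $ij$ whenever $e(X_i,X_j)=1$, is a simple connected graph on $t$ vertices with exactly $2t-3$ edges and minimum degree $\ge 2$; so at least three vertices of $H$ have degree $\le 3$. Writing $n_i=|X_i|$ and $d_i=\deg_H(i)=e(X_i,\overline{X_i})$, from $2e(X_i)=dn_i-d_i\le n_i(n_i-1)$ one gets that any part with $d_i\le d-1$ has $n_i\ge d+1$; thus every part indexed by a degree-$\le 3$ vertex of $H$ has at least $d+1$ vertices, while a singleton part has $d_i=d$.

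For the eigenvalue bound, for any $c\in\mathbb R^t\setminus\{0\}$ with $\sum_i n_ic_i=0$ the vector equal to $c_i$ on $X_i$ is orthogonal to the all-ones (Perron) eigenvector of $A$, and a short computation using $2e(X_i)=dn_i-d_i$ yields
\[
\lambda_2(G)\ \ge\ d-\frac{\sum_{ij\in E(H)}(c_i-c_j)^2}{\sum_i n_ic_i^2}.
\]
In particular, taking $c$ two-valued along a partition-respecting cut $(S,\overline S)$ (a union of parts and its complement) gives $\lambda_2(G)\ge d-\dfrac{e(S,\overline S)\,n}{|S|\,(n-|S|)}$. So it suffices to produce either a partition-respecting cut with $\dfrac{e(S,\overline S)\,n}{|S|(n-|S|)}\le\dfrac{3}{d+1}$, that is $|S|(n-|S|)\ge\dfrac{(d+1)n}{3}\,e(S,\overline S)$, or, failing that, a multi-valued $c$ as above with Rayleigh quotient at most $\frac{3}{d+1}$ --- either of which contradicts $\lambda_2(G)<d-\frac{3}{d+1}$.

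The core of the argument is to extract such a cut (or vector) from the sparse structure of $H$. Since $H$ has only $2t-3$ edges, a spanning tree leaves just $t-2$ non-tree edges, and among its $t-1$ fundamental cuts one hopes to find one that is balanced with respect to the vertex-weights $n_i$ and crossed by few non-tree edges; using that at least three parts have size $\ge d+1$, such a cut should satisfy the required inequality in the generic case. The extremal situations --- which the paper's examples realise --- are those where $H$ is (nearly) $3$-regular and all parts have roughly $d+1$ vertices; here the two-valued estimate is just barely too weak and one instead invokes the full quotient bound, using that a connected cubic graph other than $K_4$ has nonnegative second largest adjacency eigenvalue (and $K_4$ has too many edges to occur as a quotient here), so the Fiedler value of the quotient Laplacian is at most $3$, which is exactly what is needed once each $n_i\ge d+1$. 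I expect the main obstacle to be organising this into a clean case analysis that simultaneously handles parts of intermediate size, small $t$, and --- the genuinely delicate boundary case --- $d=4$, where a degree-$d$ vertex of $H$ may itself be a singleton; in each case the task reduces to verifying an inequality of the displayed type, but choosing the right cut (or test vector) uniformly is the crux.
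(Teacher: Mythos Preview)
Your setup is sound and in fact tidier than the paper's: taking the Nash--Williams/Tutte partition with $t$ minimum does force the quotient graph $H$ to be simple with exactly $2t-3$ edges, and your Rayleigh-quotient identity
\[
\lambda_2(G)\ \ge\ d-\frac{\sum_{ij\in E(H)}(c_i-c_j)^2}{\sum_i n_ic_i^2}
\]
is correct. But the proof is not finished. Your final paragraph is explicitly heuristic (``one hopes to find'', ``should satisfy'', ``I expect the main obstacle to be''); you never actually exhibit a cut or test vector achieving ratio at most $\tfrac{3}{d+1}$. In particular, two-valued cuts along a single part $S=X_i$ do \emph{not} suffice: with $r_i=3$ and $|X_i|=d+1$ the bound $\lambda_2\ge d-\tfrac{r_i\,n}{n_i(n-n_i)}$ gives only $d-\tfrac{3n}{(d+1)(n-d-1)}$, which is strictly weaker than $d-\tfrac{3}{d+1}$. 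So the fundamental-cut/Fiedler-value programme you sketch genuinely needs more work (and you would also have to handle parts with $n_i<d+1$, i.e.\ $d_i\ge d$, which your outline defers).

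The paper closes this gap by a short pigeonhole instead of any spectral fact about $H$. After reducing (as you do) to $r_i\ge 2$ for all $i$, it treats $t=3$ via the $3\times 3$ quotient matrix, and for $t\ge 4$ it sets $a=|\{i:r_i=2\}|$, $b=|\{i:r_i=3\}|$ and observes $2a+b\ge 6$ from $\sum r_i\le 4t-6$. In every case this forces two parts $X_i,X_j$ with $\max(r_i,r_j)\le 3$ and $e(X_i,X_j)=0$ (for $a\ge 3$, three parts of degree~$2$ cannot be pairwise adjacent without disconnecting $G$), and then \emph{induced-subgraph} interlacing \eqref{disjoint} gives $\lambda_2(G)\ge\min(\overline d(X_i),\overline d(X_j))\ge d-\tfrac{3}{d+1}$ directly. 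This is equivalent to choosing, in your framework, the test vector supported on $X_i\cup X_j$ alone; the point is that the existence of such a non-adjacent low-degree pair is what your sketch is missing, and it comes from a two-line counting argument rather than from spanning-tree cuts or the Fiedler value of $H$.
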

We remark that the existence of $2$ edge-disjoint spanning trees in a graph implies some good properties (cf. \cite{OY}); for example, every graph $G$ with $\sigma(G)\geq 2$ has a cycle double cover (see \cite{OY} for more details). The proof of Theorem \ref{sigma2} is contained in Section \ref{2tree:sec}. In Section \ref{2tree:sec}, we will also show that Theorem \ref{sigma2} is essentially best possible by constructing examples of $d$-regular graphs $\mathcal{G}_d$ such that $\sigma(\mathcal{G}_d)<2$ and $\lambda_2(\mathcal{G}_d)\in \left(d-\frac{3}{d+2},d-\frac{3}{d+3}\right)$. In Section \ref{2tree:sec}, we will answer a question of Palmer \cite[Section 3.7, page 19]{Pa} by proving that the minimum number of vertices of a $d$-regular graph with edge-connectivity $2$ and spanning tree number $1$ is $3(d+1)$.
\begin{theorem}\label{sigma3}
If $d\geq 6$ is an integer and $G$ is a $d$-regular graph such that $\lambda_2(G)<d-\frac{5}{d+1}$, then $G$ contains at least $3$ edge-disjoint spanning trees.
\end{theorem}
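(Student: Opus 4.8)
The plan is to follow the blueprint of the proof of Theorem~\ref{sigma2}. Assume for contradiction that $\sigma(G)\le 2$. Since $\lambda_2(G)<d$, the graph $G$ is connected, so the Nash--Williams/Tutte theorem provides a partition $V(G)=X_1\cup\cdots\cup X_t$ for which condition \eqref{NWT} fails with $k=3$, i.e. $\sum_{1\le i<j\le t}e(X_i,X_j)\le 3(t-1)-1$; fix such a partition with $t$ as small as possible. Write $n_i=|X_i|$ and let $b_i=dn_i-2e(X_i)=\sum_{j\ne i}e(X_i,X_j)$ be the number of edges leaving $X_i$, so that $\sum_i b_i=2\sum_{i<j}e(X_i,X_j)\le 6t-8$.

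First I would collect the structural facts that come for free, exactly as for Theorem~\ref{sigma2}: every $b_i\ge 1$ (connectivity); the minimality of $t$ forces $e(X_i,X_j)\le 2$ for all $i\ne j$ (otherwise merging two parts joined by at least three edges again violates \eqref{NWT}, with one fewer part), and also forces $\sum_{i<j}e(X_i,X_j)\ge 3t-5$; and $e(X_i)\le\binom{n_i}{2}$ gives $n_i^{2}-(d+1)n_i+b_i\ge 0$, so any part with $b_i\le d-1$ satisfies $n_i\ge d+1$. Call such a part \emph{large}. A part that is not large has $b_i\ge d\ge 6$, so $\sum_i b_i\le 6t-8$ forces at least two parts to be large; this is the first point where $d\ge 6$ enters.

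The analytic ingredient is eigenvalue interlacing, used precisely as in \cite{Ci} and in the proof of Theorem~\ref{sigma2}. Evaluating $\mu_2(L)=d-\lambda_2(G)$ on vectors that are constant on each $X_i$ and orthogonal to the all-ones vector gives
\[
\mu_2(L)\ \le\ \min_{c}\ \frac{\sum_{i<j}e(X_i,X_j)\,(c_i-c_j)^{2}}{\sum_i n_i c_i^{2}},
\]
the minimum over nonzero $c=(c_1,\dots,c_t)$ with $\sum_i n_i c_i=0$; equivalently the right-hand side is the smallest positive eigenvalue of the normalized quotient Laplacian of the partition. Since the hypothesis is exactly $\mu_2(L)>\frac{5}{d+1}$, it suffices to exhibit, in every configuration, a step vector whose Rayleigh quotient is at most $\frac{5}{d+1}$.

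The bulk of the proof is then a case analysis on the quotient multigraph $H$ on $\{1,\dots,t\}$ (connected, edge multiplicities at most $2$, and sparse since $\sum_i b_i\le 6t-8$). If every part is large one bounds the denominator below by $(d+1)\sum_i c_i^{2}$ and is reduced to showing that the weighted quotient Laplacian $L_W$ of $H$ (vertex weights $1$, edge weights $e(X_i,X_j)$) satisfies $\mu_2(L_W)\le 5$; this follows from a Fiedler-type bound on the algebraic connectivity of $H$ by its minimum vertex degree (which is at most $5$ because $\sum_i b_i\le 6t-8$), after disposing directly of the ``complete-quotient'' cases, which force $t\le 5$ --- this class already contains the extremal $K_5$-of-blobs examples one expects. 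I expect the genuine obstacle to be the presence of parts that are \emph{not} large: such parts have few vertices, they destroy the denominator bound, and a two-valued test vector isolating one of them loses a factor of roughly $2$, which the tight constant $\frac{5}{d+1}$ cannot absorb. The plan for this case is to pass to the coarsening obtained by merging each non-large part into an adjacent large part, and to check that the coarsening still has few crossing edges relative to its (now all large) number of parts --- a non-large part has at least $d\ge 6$ leaving edges spread over at least three neighbours, so absorbing it internalizes enough of them to preserve the sparsity needed above. Making this accounting work uniformly, together with the finitely many small-$t$ checks, is where the real effort goes, and it is exactly this step that pins down the hypothesis $d\ge 6$.
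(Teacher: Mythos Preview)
Your framework is sound and close in spirit to the paper's: both arguments pass to a bad Nash--Williams/Tutte partition, both exploit quotient/step-vector interlacing, and your observation that minimality of $t$ forces all edge multiplicities $e(X_i,X_j)\le 2$ is correct and useful. The reduction ``if every $n_i\ge d+1$, it suffices that the quotient Laplacian $L_W$ satisfies $\mu_2(L_W)\le 5$'' is also valid, via Courant--Fischer applied to the hyperplane $\sum n_ic_i=0$.

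However, two steps do not go through as written.

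\textbf{(i) The Fiedler step.} The bound you invoke, $\mu_2(L_W)\le \delta(H)$, is false for weighted graphs (already a doubled edge on two vertices has $\mu_2=4>\delta=2$); the correct Fiedler inequality $\mu_2\le \tfrac{t}{t-1}\delta(H)$ gives only $\mu_2<6$, and the trace bound $\mu_2\le \tfrac{\sum_i b_i}{t-1}\le \tfrac{6t-8}{t-1}$ also only gives $\mu_2<6$. In fact $H=K_5$ (the quotient of the extremal family $\mathcal{H}_d$) has $\mu_2(L_W)=5$, so there is no slack: one must actually show that no admissible $H$ has $\mu_2(L_W)>5$, and this does \emph{not} drop out of a one-line degree bound.

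\textbf{(ii) The coarsening step.} Absorbing a non-large part $X$ (so $b_X\ge d\ge 6$) into a single neighbour $Y$ internalises exactly $e(X,Y)\le 2$ edges. The degree sum therefore drops by at most $4$, while the target $6t'-8$ drops by $6$; so the inequality $\sum b_i'\le 6t'-8$ \emph{fails} after merging, contrary to what you need. The sentence ``absorbing it internalizes enough of them to preserve the sparsity'' is precisely backwards.

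The paper sidesteps both difficulties by a different organisation. First it uses the edge-connectivity result of \cite{Ci} to dispose of any part with $r_i\le 2$, so one may assume $r_i\ge 3$ throughout; then every part with $r_i\le 5$ automatically has $n_i\ge d+1$, and sizes of parts with $r_i\ge 6$ are never needed. Second, the key structural reduction you are missing is: if two parts with $r_i,r_j\le 5$ satisfy $e(X_i,X_j)=0$, then interlacing on the \emph{disjoint} induced subgraphs $G[X_i]$ and $G[X_j]$ already yields $\lambda_2(G)\ge d-\tfrac{5}{d+1}$. This single observation forces the low-degree parts of $H$ to be pairwise adjacent, which (together with $3a+2b+c\ge 8$) leaves only finitely many quotient shapes; each is then finished by an explicit three-block quotient matrix and the trivial bound $\lambda_2(Q)\ge \tfrac{\operatorname{tr}(Q)-d}{2}$. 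Your plan could be repaired by importing exactly this ``two non-adjacent small-boundary parts'' trick, but as stated the proposal is a sketch with real gaps rather than a proof.
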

The proof of this result is contained in Section \ref{3tree:sec}. In Section \ref{3tree:sec}, we will also show that Theorem \ref{sigma3} is essentially best possible by constructing examples of $d$-regular graphs $\mathcal{H}_d$ such that $\sigma(\mathcal{H}_d)<3$ and $\lambda_2(\mathcal{H}_d)\in \left[d-\frac{5}{d+1},d-\frac{5}{d+3}\right)$. We conclude the paper with some final remarks and open problems.

The main tools in our paper are Nash-Williams/Tutte Theorem stated above and eigenvalue interlacing described below (see also \cite{BH,GR,Hae,HJ}).
\begin{theorem}
Let $\lambda_j(M)$ be the $j$-th largest eigenvalue of a matrix $M$.  If $A$ is a real symmetric $n \times n$ matrix and $B$ is a
principal submatrix of $A$ with order $m \times m$, then for $1 \leq  i \leq  m$, 
\begin{equation}
\lambda_i(A) \geq \lambda_i(B) \geq \lambda_{n-m+i}(A).
\end{equation}
\end{theorem}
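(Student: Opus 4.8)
The plan is to deduce the two inequalities from the Courant--Fischer min--max characterization of the eigenvalues of a real symmetric matrix. Recall that for a real symmetric $N\times N$ matrix $M$ and each $1\le k\le N$,
\[
\lambda_k(M)=\max_{\dim S=k}\ \min_{0\ne x\in S}\frac{x^{T}Mx}{x^{T}x}
=\min_{\dim S=N-k+1}\ \max_{0\ne x\in S}\frac{x^{T}Mx}{x^{T}x},
\]
where $S$ ranges over subspaces of $\mathbb{R}^{N}$ of the stated dimension; this follows from the spectral theorem by expanding $x$ in an orthonormal eigenbasis of $M$. Since conjugating $A$ by a permutation matrix alters neither its spectrum nor the hypothesis that $B$ is a principal submatrix, I would first assume without loss of generality that $B$ is the leading $m\times m$ block of $A$, and identify $\mathbb{R}^{m}$ with the coordinate subspace $W=\{x\in\mathbb{R}^{n}:x_{m+1}=\dots=x_{n}=0\}$. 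The single fact doing all the work is that for $x\in W$ one has $x^{T}Ax=x^{T}Bx$ (and $x^{T}x$ is unchanged), so a Rayleigh quotient for $B$ is literally a Rayleigh quotient for $A$ restricted to $W$.

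For the upper bound $\lambda_i(A)\ge\lambda_i(B)$, let $S^{\ast}\subseteq W$ be an $i$-dimensional subspace attaining the maximum in the first formula above for $B$. Regarded inside $\mathbb{R}^{n}$ it is still $i$-dimensional, so $\lambda_i(A)\ge\min_{0\ne x\in S^{\ast}}\frac{x^{T}Ax}{x^{T}x}=\min_{0\ne x\in S^{\ast}}\frac{x^{T}Bx}{x^{T}x}=\lambda_i(B)$.

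For the lower bound $\lambda_i(B)\ge\lambda_{n-m+i}(A)$, I would use the $\max$-form for $A$ at index $n-m+i$ and the $\min$-form for $B$ at index $i$. Let $U^{\ast}\subseteq\mathbb{R}^{n}$ be an $(n-m+i)$-dimensional subspace attaining the maximum for $A$, and let $V\subseteq W$ be an arbitrary $(m-i+1)$-dimensional subspace. Since both live in $\mathbb{R}^{n}$, $\dim(U^{\ast}\cap V)\ge(n-m+i)+(m-i+1)-n=1$, so there is a nonzero $x\in U^{\ast}\cap V$. Membership in $U^{\ast}$ gives $\frac{x^{T}Ax}{x^{T}x}\ge\lambda_{n-m+i}(A)$, while membership in $W$ lets me replace $A$ by $B$, whence $\max_{0\ne y\in V}\frac{y^{T}By}{y^{T}y}\ge\lambda_{n-m+i}(A)$; taking the minimum over all such $V$ yields $\lambda_i(B)\ge\lambda_{n-m+i}(A)$. (Alternatively, this second inequality is just the first one applied to $-A$ and $-B$, using $\lambda_j(-M)=-\lambda_{N-j+1}(M)$.)

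I do not expect a real obstacle. The only point requiring care is the dimension count in the lower bound: one must be sure to regard $U^{\ast}$ and $V$ as subspaces of the \emph{same} space $\mathbb{R}^{n}$ so that $\dim(U^{\ast}\cap V)\ge\dim U^{\ast}+\dim V-n$ applies and forces a nonzero common vector; everything else is bookkeeping with the Rayleigh quotient, plus (if one wants a self-contained account) a proof of Courant--Fischer itself.
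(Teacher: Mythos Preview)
Your proof via Courant--Fischer is correct and is the standard argument (essentially the one in Horn--Johnson, which the paper cites). Note, however, that the paper does not give its own proof of this theorem at all: it is stated as a known tool in the introduction with references to \cite{BH,GR,Hae,HJ}, and used throughout without further justification. So there is nothing to compare against beyond observing that your write-up matches the textbook proof behind those citations.
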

This theorem implies that if $H$ is an induced subgraph of a graph $G$, then the eigenvalues of $H$ interlace the eigenvalues of $G$. 

If $S$ and $T$ are disjoint subsets of the vertex set of $G$, then we denote by $E(S,T)$ the set of edges with one endpoint in $S$ and another endpoint in $T$. Also, let $e(S,T)=|E(S,T)|$. If $S$ is a subset of vertices of $G$, let $G[S]$ denote the subgraph of $G$ induced by $S$. The previous interlacing result implies that if $A$ and $B$ are two disjoint subsets of a graph $G$ such that $e(A,B)=0$, then the eigenvalues of $G[A\cup B]$ interlace the eigenvalues of $G$. As the spectrum of $G[A\cup B]$ is the union of the spectrum of $G[A]$ and the spectrum of $G[B]$ (this follows from $e(A,B)=0$), it follows that 
\begin{equation}\label{disjoint}
\lambda_2(G)\geq \lambda_2(G[A\cup B])\geq \min(\lambda_1(G[A]),\lambda_1(G[B]))\geq \min(\overline{d}(A),\overline{d}(B)),
\end{equation}
where $\overline{d}(S)$ denotes the average degree of $G[S]$.

Consider a partition $V(G)=V_1\cup \dots V_s$ of the vertex set of $G$ into $s$ non-empty subsets. For $1 \leq i, j \leq s$, let $b_{i,j}$ denote the average number of neighbors in $V_j$ of the vertices in $V_i$. The quotient matrix of this partition is the $s \times s$ matrix whose $(i,j)$-th entry equals $b_{i,j}$. A theorem of Haemers (see \cite{Hae} and also, \cite{BH,GR}) states that the eigenvalues of the quotient matrix interlace the eigenvalues of $G$. The previous partition is called equitable if for each $1\leq i,j\leq s$, any vertex $v\in V_i$ has exactly $b_{i,j}$ neighbors in $V_j$. In this case, the eigenvalues of the quotient matrix are eigenvalues of $G$ and the spectral radius of the quotient matrix equals the spectral radius of $G$ (see \cite{BH,GR,Hae} for more details).

\section{Eigenvalue condition for $2$ edge-disjoint spanning trees}\label{2tree:sec}

In this section, we give a proof of Theorem \ref{sigma2} showing that if $G$ is a $d$-regular graph such that $\lambda_2(G)<d-\frac{3}{d+1}$, then $G$ contains at least $2$ edge-disjoint spanning trees. We show that the bound $d-\frac{3}{d+1}$ is essentially best possible by constructing examples of $d$-regular graphs $\mathcal{G}_d$ having $\sigma(\mathcal{G}_d)<2$ and $d-\frac{3}{d+2}<\lambda_2(\mathcal{G}_d)<d-\frac{3}{d+3}$.

\begin{proof}[Proof of Theorem \ref{sigma2}]

We prove the contrapositive. Assume that $G$ does not contain $2$-edge-disjoint spanning trees. We will show that $\lambda_2(G)\geq d-\frac{3}{d+1}$.

By Nash-Williams/Tutte Theorem, there exists a partition of the vertex set of $G$ into $t$ subsets $X_1,\dots, X_t$ such that 
\begin{equation}
\sum_{1\leq i<j\leq t}e(X_i,X_j)\leq 2(t-1)-1=2t-3.
\end{equation}
It follows that 
\begin{equation}\label{degreesum2}
\sum_{i=1}^{t}r_i\leq 4t-6
\end{equation}
where $r_i=e(X_i,V\setminus X_i)$. 

Let $n_i=|X_i|$ for $1\leq i\leq t$. It is easy to see that $r_i\leq d-1$ implies $n_i\geq d+1$ for each $1\leq i\leq 3$.

If $t=2$, then $e(X_1,V\setminus X_1)=1$. By results of \cite{Ci}, it follows that $\lambda_2(G)>d-\frac{2}{d+4}>d-\frac{3}{d+1}$ and this finishes the proof of this case. Actually, we may assume $r_i\geq 2$ for every $1\leq i\leq t$ since $r_i=1$ and results of \cite{Ci} would imply $\lambda_2(G)>d-\frac{2}{d+4}>d-\frac{3}{d+1}$.

If $t=3$, then $r_1+r_2+r_3\leq 6$ which implies $r_1=r_2=r_3=2$. The only way this can happen is if $e(X_i,X_j)=1$ for every $1\leq i<j\leq 3$. Consider the partition of $G$ into $X_1, X_2$ and $X_3$.  The quotient matrix of this partition is
$$
A_3=
\begin{bmatrix}
d-\frac{2}{n_1} & \frac{1}{n_1} & \frac{1}{n_1}\\
\frac{1}{n_2} & d-\frac{2}{n_2} & \frac{1}{n_2}\\
\frac{1}{n_3} & \frac{1}{n_3} & d-\frac{2}{n_3}
\end{bmatrix}.
$$

The largest eigenvalue of $A_3$ is $d$ and the second eigenvalue of $A_3$ equals $$d-\frac{1}{n_1}-\frac{1}{n_2}-\frac{1}{n_3}+\sqrt{\frac{1}{n_1^2}+\frac{1}{n_2^2}+\frac{1}{n_3^2}-\frac{1}{n_1n_2}-\frac{1}{n_2n_3}-\frac{1}{n_3n_1}},$$
which is greater than $d-\frac{1}{n_1}-\frac{1}{n_2}-\frac{1}{n_3}$. Thus, eigenvalue interlacing and $n_i\geq d+1$ for $1\leq i\leq 3$ imply $\lambda_2(G)\geq \lambda_2(A_3)\geq d-\frac{3}{d+1}$. This finishes the proof of the case $t=3$.

Assume $t\geq 4$ from now on. Let $a$ denote the number of $r_i$'s that equal $2$ and $b$ denote the number of $r_j$'s that equal $3$. Using equation \eqref{degreesum2}, we get
\begin{equation*}
4t-6\geq \sum_{i=1}^{t}r_i\geq 2a+3b+4(t-a-b)=4t-2a-b,
\end{equation*}
which implies $2a+b\geq 6$.

Recall that $\overline{d}(A)$ denotes the average degree of the subgraph of $G$ induced by the subset $A\subset V(G)$.

If $a=0$, then $b\geq 6$. This implies that there exist two indices $1\leq i<j\leq t$ such that $r_i=r_j=3$ and $e(X_i,X_j)=0$. Eigenvalue interlacing \eqref{disjoint} implies $\lambda_2(G)\geq \lambda_2(G[X_i\cup X_j])\geq \min (\lambda_1(G[X_i]),\lambda_1(G[X_j]))\geq \min(\overline{d}(X_i),\overline{d}(X_j)\geq \min (d-\frac{3}{n_i},d-\frac{3}{n_j})\geq d-\frac{3}{d+1}$.

If $a=1$, then $b\geq 4$. This implies there exist two indices $1\leq i<j\leq t$ such that $r_i=2$, $r_j=3$ and $e(X_i,X_j)=0$. Eigenvalue interlacing \eqref{disjoint} implies $\lambda_2(G)\geq \lambda_2(G[X_i\cup X_j])\geq \min (\lambda_1(G[X_i]),\lambda_1(G[X_j]))\geq \min(\overline{d}(X_i),\overline{d}(X_j))\geq \min (d-\frac{2}{n_i},d-\frac{3}{n_j})\geq d-\frac{3}{d+1}$. 

If $a=2$, then $b\geq 2$. If there exist two indices $1\leq i<j\leq t$ such that $r_i=r_j=2$ and $e(X_i,X_j)=0$, then eigenvalue interlacing \eqref{disjoint} implies $\lambda_2(G)\geq \lambda_2(G[(X_i\cup X_j])\geq \min (\lambda_1(G[X_i]),\lambda_1(G[X_j]))\geq \min(\overline{d}(X_i),\overline{d}(X_j))\geq  \min (d-\frac{2}{n_i},d-\frac{2}{n_j})\geq d-\frac{2}{d+1}>d-\frac{3}{d+1}$. Otherwise, there exist two indices $1\leq p<q\leq t$ such that $r_p=2$, $r_q=3$ and $e(X_p,X_q)=0$. By a similar eigenvalue interlacing argument, we get $\lambda_2(G)\geq d-\frac{3}{d+1}$ in this case as well.

If $a=3$, then if there exist two indices $1\leq i<j\leq t$ such that $r_i=r_j=2$ and $e(X_i,X_j)=0$, then as before, eigenvalue interlacing \eqref{disjoint} implies $\lambda_2(G)\geq d-\frac{2}{d+1}>d-\frac{3}{d+1}$. This finishes the proof of Theorem \ref{sigma2}.
\end{proof}

%
%Picture here for $k=2$ and $d=4$.
%
\begin{figure}[h]
\begin{center}
\includegraphics[scale=0.5]{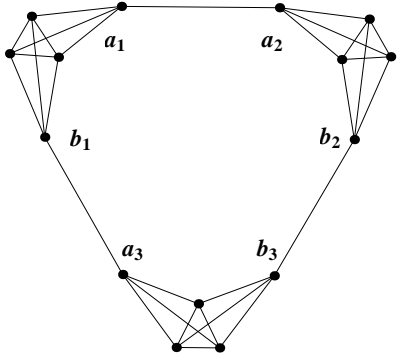}
\caption{The $4$-regular graph $\mathcal{G}_4$ with $\sigma(\mathcal{G}_4)=1$ and $3.5=4-\frac{3}{4+2}<\lambda_2(\mathcal{G}_4)\approx 3.569 < 4-\frac{3}{4+3}\approx 3.571$}
\end{center}
\end{figure}

We show that our bound is essentially best possible by presenting a family of $d$-regular graphs $\mathcal{G}_d$ with $d-\frac{3}{d+2}<\lambda_2(\mathcal{G}_d)<d-\frac{3}{d+3}$ and $\sigma(\mathcal{G}_d)=1$, for every $d\geq 4$.

For $d\geq 4$, consider three vertex disjoint copies $G_1,G_2,G_3$ of $K_{d+1}$ minus one edge. Let $a_i$ and $b_i$ be the two non adjacent vertices in $G_i$ for $1\leq i\leq 3$. Let $\mathcal{G}_d$ be the $d$-regular graph obtained by joining $a_1$ with $a_2$, $b_2$ and $b_3$ and $a_3$ and $b_1$.
The graph $\mathcal{G}_d$ has $3(d+1)$ vertices and is $d$-regular. The partition of the vertex set of $\mathcal{G}_d$ into $V(G_1), V(G_2), V(G_3)$ has the property that the number of edges between the parts equals $3$. By Nash-Williams/Tutte Theorem, this implies $\sigma(\mathcal{G}_d)<2$.

For $d\geq 4$, denote by $\theta_d$ the largest root of the cubic polynomial 
\begin{equation}
P_3(x)=x^3+(2-d)x^2+(1-2d)x+2d-3.
\end{equation}
\begin{lemma}
For every integer $d\geq 4$, the second largest eigenvalue of $\mathcal{G}_d$ is $\theta_d$.
\end{lemma}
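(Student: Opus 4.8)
The plan is to compute the whole spectrum of $\mathcal{G}_d$ and then read off $\theta_d$ as the second‑largest eigenvalue. Write $G_i=K_{d+1}-a_ib_i$ and relabel the nonadjacent pair of $G_i$ as $\{x_i,y_i\}$, with $x_i$ lying on the cross‑edge leaving $G_i$ and $y_i$ on the cross‑edge entering $G_i$, so that the three cross‑edges are $x_iy_{i+1}$ for $i\in\{1,2,3\}$ (indices mod $3$); let $Z_i$ be the set of the remaining $d-1$ vertices of $G_i$. One checks that the map sending $G_i$ to $G_{i+1}$ via $x_i\mapsto x_{i+1}$, $y_i\mapsto y_{i+1}$, and $Z_i\mapsto Z_{i+1}$ (by any fixed bijection on the latter) is an automorphism $\phi$ of $\mathcal{G}_d$ of order $3$; equivalently, the partition $\pi$ of $V(\mathcal{G}_d)$ into the nine cells $\{x_i\},\{y_i\},Z_i$ $(i=1,2,3)$ is equitable and its cells are cyclically permuted by $\phi$.

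Using this I would produce eigenvectors explicitly. Fix a primitive cube root of unity $\zeta$, and for $k\in\{0,1,2\}$ put
\[
M_k=\begin{bmatrix}0&\zeta^{k}&d-1\\ \zeta^{2k}&0&d-1\\ 1&1&d-2\end{bmatrix}.
\]
For an eigenvector $(p,q,r)^{\top}$ of $M_k$ with eigenvalue $\theta$, let $v$ be the vector on $V(\mathcal{G}_d)$ with $v(x_i)=p\,\zeta^{ki}$, $v(y_i)=q\,\zeta^{ki}$, and $v\equiv r\,\zeta^{ki}$ on $Z_i$; a one‑line check (using $\zeta^{3}=1$) gives $Av=\theta v$. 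For $k=0$ this yields real eigenvectors whose eigenvalues are the roots of $\det(xI-M_0)=(x-d)(x+1)^2$, i.e. $d,-1,-1$; for $k=1$ and $k=2$ it yields eigenvectors whose eigenvalues are the roots of $\det(xI-M_1)=\det(xI-M_2)=P_3(x)$, the last identities coming from a direct $3\times 3$ determinant computation that uses $\zeta^{3}=1$ and $1+\zeta+\zeta^{2}=0$ and is symmetric in $\zeta,\zeta^{2}$ (this is the only real calculation). Finally, each vector supported on a single $Z_i$ with zero coordinate sum is an eigenvector for $-1$; there are $3(d-2)$ independent ones, and together with the nine eigenvectors above they span the whole $3(d+1)$‑dimensional space. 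Hence the spectrum of $\mathcal{G}_d$ is exactly $\{d\}$, the three roots of $P_3$ (each with multiplicity $2$), and $-1$ (with multiplicity $3d-4$).

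To conclude, recall from the excerpt that a connected $d$‑regular graph has $\lambda_1=d$ simple and all other eigenvalues strictly below $d$; since $P_3(d)=3(d-1)\neq 0$, no root of $P_3$ equals or exceeds $d$, and as these roots are eigenvalues of the real symmetric matrix $A(\mathcal{G}_d)$ they are real, so $\theta_d$ is well defined and $\theta_d<d$. From $P_3(d-1)=-(d^2-3d+3)<0<P_3(d)$ we get $\theta_d\in(d-1,d)$, which is larger than $-1$ and than the other two roots of $P_3$; therefore $\theta_d$ is the second‑largest eigenvalue of $\mathcal{G}_d$, i.e. $\lambda_2(\mathcal{G}_d)=\theta_d$. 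The main obstacle is the complex $3\times 3$ determinant evaluation for $M_1$ (and the analogous one for $M_0$), together with the elementary sign analysis of $P_3$ that pins $\theta_d$ strictly between $d-1$ and $d$; everything else is bookkeeping with the automorphism $\phi$ and the partition $\pi$.
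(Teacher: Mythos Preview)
Your proof is correct and takes a genuinely different route from the paper's. The paper works directly with the $9\times 9$ quotient matrix $A_9$ of the equitable nine-cell partition, computes its characteristic polynomial $(x-d)(x+1)^2P_3(x)^2$ by brute force, and then exhibits $3d-4$ explicit linearly independent $(-1)$-eigenvectors (the last two of which are somewhat ad hoc) to account for the rest of the spectrum. You instead exploit the order-$3$ rotational automorphism $\phi$ to decompose the action of $A$ on the cell-constant space into the three $3\times 3$ blocks $M_0,M_1,M_2$; this explains structurally why the factor $P_3(x)^2$ appears (as $\det(xI-M_1)=\det(xI-M_2)$, the two matrices being complex conjugate) and why $(x-d)(x+1)^2$ appears (as $\det(xI-M_0)$), and it reduces the only real computation to a single $3\times 3$ determinant. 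Your sign check $P_3(d-1)=-(d^2-3d+3)<0<3(d-1)=P_3(d)$ is also a self-contained way to place $\theta_d$ strictly above $-1$, which the paper's proof of this lemma defers to the subsequent lemma.

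One small wrinkle: when you say the ``nine eigenvectors above'' together with the $3(d-2)$ zero-sum vectors span $\mathbb{C}^{3(d+1)}$, you are tacitly assuming each $M_k$ contributes three linearly independent eigenvectors. This is clear for $M_0$ (since $\operatorname{rank}(M_0+I)=1$), but for $M_1,M_2$ it would require $P_3$ to have simple roots, which you have not checked. The conclusion is unaffected---since $A$ is real symmetric, its restriction to the $9$-dimensional space of cell-constant vectors is diagonalizable with characteristic polynomial $\prod_{k}\det(xI-M_k)$ regardless of whether the individual $M_k$ are diagonalizable---but you may want to phrase the spanning step that way.
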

\begin{proof}
Consider the following partition of the vertex set of $\mathcal{G}_d$ into nine parts: $V(G_1)\setminus \{a_1,b_1\}, V(G_2)\setminus \{a_2,b_2\},V(G_3)\setminus \{a_3,b_3\}, \{a_1\}, \{b_1\}, \{a_2\},\{b_2\}, \{a_3\} ,\{b_3\}$. This is an equitable partition whose quotient matrix is the following
\begin{equation}
A_9=\begin{bmatrix}
 d-2 & 0 & 0 & 1 & 1 & 0 & 0 & 0 & 0 \\
 0 & d-2 & 0 & 0 & 0 & 1 & 1 & 0 & 0 \\
 0 & 0 & d-2 & 0 & 0 & 0 & 0 & 1 & 1 \\
 d-1 & 0 & 0 & 0 & 0 & 1 & 0 & 0 & 0 \\
 d-1 & 0 & 0 & 0 & 0 & 0 & 0 & 1 & 0 \\
 0 & d-1 & 0 & 1 & 0 & 0 & 0 & 0 & 0 \\
 0 & d-1 & 0 & 0 & 0 & 0 & 0 & 0 & 1 \\
 0 & 0 & d-1 & 0 & 1 & 0 & 0 & 0 & 0 \\
 0 & 0 & d-1 & 0 & 0 & 0 & 1 & 0 & 0 \\
 \end{bmatrix}.
\end{equation}

The characteristic polynomial of $A_9$ is 
\begin{equation}
P_9(x)=(x-d)(x+1)^{2}[x^3+(2-d)x^2+(1-2d)x+2d-3]^{2}.
\end{equation}

Let $\lambda_2\geq \lambda_3\geq \lambda_4$ denote the solutions of the equation $x^3+(2-d)x^2+(1-2d)x+2d-3=0$. Because the above partition is equitable, it follows that $d, \lambda_2,\lambda_3,\lambda_4$ and $-1$ are eigenvalues of $\mathcal{G}_d$, and the multiplicity of each of them as an eigenvalue of $\mathcal{G}_d$ is at least $2$. 

We claim the spectrum of $\mathcal{G}_d$ is  
\begin{equation}\label{spectrumGd}
d^{(1)}, \lambda_2^{(2)}, \lambda_3^{(2)}, \lambda_4^{(2)}, (-1)^{(3d-4)}.
\end{equation}
It suffices to obtain $3d-4$ linearly independent eigenvectors corresponding to $-1$. Consider two distinct vertices $u_{1}$ and $u_{2}$ in $V(G_1)\setminus \{a_1,b_1\}$. Define an eigenvector where the entry corresponding to $u_{1}$ is $1$, the entry corresponding to $u_{2}$ is $-1$, and all the other entries are $0$.  We create $d-2$ eigenvectors by letting $u_{2}$ to be each of the $d-2$ vertices in $V(G_1)\setminus \{a_1,b_1,u_{1}\}$.  This can also be done to two vertices $u_{1}',u_{2}' \in V(G_2)\setminus \{a_2,b_2\}$ or two vertices $u_{1}'',u_{2}'' \in V(G_3)\setminus \{a_3,b_3\}$. This way, we obtain a total of $3d-6$ linearly independent eigenvectors corresponding to $-1$. Furthermore, define an vector with entries at three fixed vertices $u_{1}\in V(G_1)\setminus \{a_1,b_1\}, u_{1}' \in V(G_2)\setminus \{a_2,b_2\}, u_{1}''\in V(G_3)\setminus \{a_3,b_3\}$ equal to $-1$, with entries at $a_{1}, b_{2}, a_{3}$ equal to  $1$ and with entries $0$ everywhere else. It is easy to check this is an eigenvector corresponding to $0$.  To obtain the final eigenvector, define a new vector by setting the entries at three fixed vertices $u_{1}\in V(G_1)\setminus \{a_1,b_1\}, u_{1}' \in V(G_2)\setminus \{a_2,b_2\}, u_{1}''\in V(G_3)\setminus \{a_3,b_3\}$ to be $-1$, the entries at  $b_{1},a_{2},$ and $b_{3}$ to be $1$ and the remaining entries to be $0$. It is easy to check all these $3d-4$ vectors are linearly independent eigenvectors corresponding to eigenvalue $-1$. Having obtained the entire spectrum of $\mathcal{G}_{d}$, the second largest eigenvalue of $\mathcal{G}_{d}$ must be $\theta_d$.
\end{proof}

\begin{lemma}
For every integer $d\geq 4$, 
$$
d-\frac{3}{d+2}<\theta_d<d-\frac{3}{d+3}.
$$
\end{lemma}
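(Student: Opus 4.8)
The plan is to locate the largest root $\theta_d$ of $P_3$ directly by a sign analysis. Set $a_d=d-\frac{3}{d+2}$ and $b_d=d-\frac{3}{d+3}$; note $a_d<b_d<d$, and the goal is to show $P_3(a_d)<0$ and $P_3(b_d)>0$ and then to argue that these sign conditions really do trap $\theta_d$ in $(a_d,b_d)$ rather than leaving it above $b_d$.

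First I would record the shape of $P_3$. The derivative $P_3'(x)=3x^2+2(2-d)x+(1-2d)$ has discriminant $4(d-2)^2-12(1-2d)=4(d+1)^2$, so its roots are exactly $x=-1$ and $x=\frac{2d-1}{3}$; hence $P_3$ has a local maximum at $-1$, a local minimum at $\frac{2d-1}{3}$, and is strictly increasing on $[\frac{2d-1}{3},\infty)$. One checks $P_3(-1)=3(d-1)>0$, and that $a_d>\frac{2d-1}{3}$ for $d\ge 4$ (this is equivalent to $(d+1)(d+2)>9$).

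The computational heart is the evaluation of $P_3$ at $x=d-\frac{3}{d+k}$ for $k\in\{2,3\}$. It is cleanest to Taylor-expand around $x=d$: since $P_3(d)=3(d-1)$, $P_3'(d)=(d+1)^2$, $\frac12P_3''(d)=2(d+1)$ and $\frac16P_3'''(d)=1$, writing $\epsilon=\frac{3}{d+k}$ gives $P_3(d-\epsilon)=3(d-1)-(d+1)^2\epsilon+2(d+1)\epsilon^2-\epsilon^3$, and therefore
\[
(d+k)^3\,P_3\!\left(d-\tfrac{3}{d+k}\right)=3\Big[(d-1)(d+k)^3-(d+1)^2(d+k)^2+6(d+1)(d+k)-9\Big].
\]
Expanding the bracket, for $k=2$ it simplifies to $-d^3-d^2+2d-9$, which is negative for every $d\ge 1$, and for $k=3$ it simplifies to $2d^2-27$, which is positive for $d\ge 4$. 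Hence $P_3(a_d)<0$ and $P_3(b_d)>0$. I expect this step to be the only real obstacle: it is a routine but slightly lengthy polynomial simplification, and the Taylor form is what keeps it manageable.

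Finally I would assemble the conclusion. Since $\theta_d$ is the largest root of $P_3$ and $P_3$ has positive leading coefficient, $P_3(x)>0$ for all $x>\theta_d$, so $P_3(a_d)<0$ already gives $a_d<\theta_d$. For the upper bound, $a_d>\frac{2d-1}{3}$ and the monotonicity of $P_3$ on $[\frac{2d-1}{3},\infty)$ force $P_3\!\left(\frac{2d-1}{3}\right)\le P_3(a_d)<0$; combined with $P_3(-1)=3(d-1)>0$ this shows $P_3$ has three real roots $\lambda_4<-1<\lambda_3<\frac{2d-1}{3}<\theta_d$. In particular $b_d>a_d>\frac{2d-1}{3}>\lambda_3>\lambda_4$, so in the factorization $P_3(b_d)=(b_d-\theta_d)(b_d-\lambda_3)(b_d-\lambda_4)$ the last two factors are positive; since $P_3(b_d)>0$, the first factor is positive as well, i.e. $\theta_d<b_d$. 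This yields $a_d<\theta_d<b_d$, as desired. (Alternatively, the fact that $P_3$ has three real roots may simply be quoted from the previous lemma, since $\theta_d,\lambda_3,\lambda_4$ are eigenvalues of $\mathcal{G}_d$.)
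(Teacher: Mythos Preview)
Your proof is correct and follows essentially the same strategy as the paper: evaluate $P_3$ at the two endpoints to get opposite signs, and use that $P_3$ is increasing past its local minimum $\tfrac{2d-1}{3}$ (which lies below both endpoints) to pin down the largest root. Your Taylor expansion around $x=d$ is a tidy computational device, but it reproduces exactly the paper's values $P_3(a_d)=-\tfrac{3(d^3+d^2-2d+9)}{(d+2)^3}$ and $P_3(b_d)=\tfrac{6d^2-81}{(d+3)^3}$, and your extra care in locating all three real roots merely makes explicit what the paper's monotonicity remark already implies.
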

\begin{proof}
We find that for $d\ge 4$,
$$P_{3}\left(d-\frac{3}{d+2}\right) = -\frac{3 \left(9+d \left(-2+d+d^2\right)\right)}{(2+d)^3}<0,$$
$$P_{3}\left(d-\frac{3}{d+3}\right)=\frac{-81+6 d^2}{(3+d)^3}>0,$$
and $P_{3}'(x)>0$ beyond $x=\frac{1}{3} (-1+2 d)<d-\frac{3}{d+3}$.  Hence,
\begin{equation}\label{thetad}
d-\frac{3}{d+2}<\theta_d<d-\frac{3}{d+3}
\end{equation}
for every $d\geq 4$.
\end{proof}

Palmer \cite{Pa} asked whether or not the graph $\mathcal{G}_4$ has the smallest number of vertices among all $4$-regular graphs with edge-connectivity $2$ and spanning tree number $1$. We answer this question affirmatively below.
\begin{proposition}
Let $d\geq 4$ be an integer. If $G$ is a $d$-regular graph such that $\kappa'(G)=2$ and $\sigma(G)=1$, then $G$ has at least $3(d+1)$ vertices. The only graph with these properties and $3(d+1)$ vertices is $\mathcal{G}_d$.
\end{proposition}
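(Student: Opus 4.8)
The plan is to run the Nash--Williams/Tutte argument exactly as in the proof of Theorem~\ref{sigma2}, but this time recording lower bounds on the sizes $n_i=|X_i|$ of the parts. Since $\sigma(G)<2$, there is a partition $V(G)=X_1\cup\dots\cup X_t$ with $\sum_{i<j}e(X_i,X_j)\le 2t-3$, hence $\sum_{i=1}^{t}r_i\le 4t-6$ where $r_i=e(X_i,V\setminus X_i)$; since $\kappa'(G)=2$, each $E(X_i,V\setminus X_i)$ is a nontrivial edge cut, so $r_i\ge 2$, which already forces $t\ge 3$ (for $t=2$ we would need $e(X_1,X_2)\le 1<2$). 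The one extra ingredient I need is the elementary estimate: \emph{if $r_i\le d-1$ then $n_i\ge d+1$}. Indeed, if $n_i\le d$ then every vertex of $X_i$ has at least $d-(n_i-1)\ge 1$ edges leaving $X_i$, so $r_i\ge n_i(d-n_i+1)\ge d$ (the concave expression $n_i(d-n_i+1)$ attains its minimum over $\{1,\dots,d\}$ at the endpoints, where it equals $d$), a contradiction. Since $d\ge 4$, in particular every part with $r_i\in\{2,\dots,d-1\}$ has at least $d+1$ vertices.

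For $t=3$, the bounds $r_1+r_2+r_3\le 6$ and $r_i\ge2$ force $r_1=r_2=r_3=2$, hence $e(X_i,X_j)=1$ for every pair; each $r_i\le d-1$, so $n_i\ge d+1$ and $n\ge 3(d+1)$. For $t\ge4$, let $c$ be the number of parts with $r_i\ge d$. From $cd+2(t-c)\le\sum r_i\le 4t-6$ we get $c(d-2)\le 2t-6$, so the number $t-c$ of parts with $r_i\le d-1$ satisfies $t-c\ge\frac{t(d-4)+6}{d-2}\ge 3$ for every $d\ge4$ and $t\ge4$ (the inequality $\frac{t(d-4)+6}{d-2}\ge 3$ reduces to $(t-3)(d-4)\ge0$), and $t-c\ge4$ once $d\ge5$. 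Each of those $t-c$ parts has at least $d+1$ vertices, and each remaining part has at least one vertex; since $t\ge 4$, the subcase $t-c=3$ forces $c=t-3\ge1$ and hence $n\ge 3(d+1)+1$, while $t-c\ge4$ gives $n\ge4(d+1)$. Thus $n>3(d+1)$ whenever $t\ge4$, and in all cases $n\ge 3(d+1)$, with equality possible only if $t=3$.

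For the equality case, assume $n=3(d+1)$. Then $t=3$, $r_1=r_2=r_3=2$, $e(X_i,X_j)=1$ for all pairs, and $n_i=d+1$ for each $i$. Counting edges, $G[X_i]$ has $\tfrac{d(d+1)-2}{2}=\binom{d+1}{2}-1$ edges, hence $G[X_i]$ is $K_{d+1}$ with exactly one edge, say $a_ib_i$, removed; then $a_i$ and $b_i$ have degree $d-1$ in $G[X_i]$ while all other vertices of $X_i$ have degree $d$ there, so each of the two cross edges at $X_i$ is incident to $a_i$ or $b_i$, and the three cross edges form a perfect matching of $\{a_1,b_1,a_2,b_2,a_3,b_3\}$ containing exactly one edge between each pair of parts. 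Relabelling $a_i\leftrightarrow b_i$ within the parts as needed---which merely renames vertices, using that $K_{d+1}-e$ is symmetric in its two nonadjacent vertices---we may take this matching to be $a_1a_2,\ b_1a_3,\ b_2b_3$, at which point $G$ coincides with $\mathcal{G}_d$; hence $G\cong\mathcal{G}_d$. Conversely $\mathcal{G}_d$ has $3(d+1)$ vertices and, as noted in this section, satisfies $\kappa'=2$ and $\sigma=1$, so it is the unique extremal graph.

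I expect the main obstacle to be the bookkeeping for $t\ge4$: one must prevent many parts from having large boundary $r_i\ge d$ (such parts could be arbitrarily small), and the inequality $c(d-2)\le 2t-6$ is precisely what guarantees that at least three parts remain full-sized. A secondary point of care is the equality analysis for $t=3$: one has to argue that the edge pattern between the three $K_{d+1}-e$ blocks is forced up to isomorphism, which is where the symmetry of $K_{d+1}-e$ in its two nonadjacent vertices is used.
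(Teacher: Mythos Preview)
Your proof is correct and follows essentially the same route as the paper's: both invoke Nash--Williams/Tutte to get $\sum r_i\le 4t-6$, use $\kappa'(G)=2$ to force $r_i\ge 2$ and hence $t\ge 3$, note that $r_i\le d-1$ implies $|X_i|\ge d+1$, and then count large parts versus small parts for $t\ge 4$ (the paper phrases this via $\alpha=|\{i:|X_i|\ge d+1\}|$ and derives a contradiction from $\alpha\le 2$, whereas you phrase it via $c=|\{i:r_i\ge d\}|$ and show $t-c\ge 3$ directly, but the underlying inequality $(d-2)c\le 2t-6$ is the same). Your treatment of the equality case is in fact more detailed than the paper's, which simply asserts that equality forces $G=\mathcal{G}_d$; your argument that the three cross edges must form a perfect matching on $\{a_i,b_i\}$ and that all such matchings are equivalent up to the $a_i\leftrightarrow b_i$ symmetry of $K_{d+1}-e$ fills in that step.
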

\begin{proof}
As $\sigma(G)=1<2$, by Nash-Williams/Tutte theorem, there exists a partition $V(G)=X_1\cup \dots \cup X_t$ such that $e(X_1,\dots,X_t)\leq 2t-3$. This implies $r_1+\dots +r_t\leq 4t-6$. As $\kappa'(G)=2$, it means that $r_i\geq 2$ for each $1\leq i\leq t$ which implies $4t-6\geq 2t$ and thus, $t\geq 3$. 

If $t=3$, then $r_i=2$ for each $1\leq i\leq 3$ and thus, $e(X_i,X_j)=1$ for each $1\leq i\neq j\leq 3$. As $d\geq 4$ and $r_i=2$, we deduce that $|X_i|\geq d+1$. Equality happens if and only if $X_i$ induces a $K_{d+1}$ without one edge. Thus, we obtain that $|V(G)|=|X_1|+|X_2|+|X_3|\geq 3(d+1)$ with equality if and only if $G=\mathcal{G}_d$.

If $t\geq 4$, then let $\alpha$ denote the number of $X_i$'s such that $|X_i|\geq d+1$. If $\alpha\geq 3$, then $|V(G)|>3(d+1)$ and we are done. Otherwise, $\alpha\leq 2$. Note that if $|X_i|\leq d$, then $r_i\geq d$. Thus,
$$
4t-6\geq r_1+\dots +r_t\geq 2\alpha+d(t-\alpha)=dt-(d-2)\alpha
$$
which implies $(d-2)\alpha\geq (d-4)t+6$. As $\alpha \leq 2$ and $t\geq 4$, we obtain $2(d-2)\geq (d-4)4+6$ which is equivalent to $2d\leq 6$, contradiction. This finishes our proof.
\end{proof}

\section{Eigenvalue condition for $3$ edge-disjoint spanning trees}\label{3tree:sec}

In this section, we give a proof of Theorem \ref{sigma3} showing that if $G$ is a $d$-regular graph such that $\lambda_2(G)<d-\frac{5}{d+1}$, then $G$ contains at least $3$ edge-disjoint spanning trees. We show that the bound $d-\frac{5}{d+1}$ is essentially best possible by constructing examples of $d$-regular graphs $\mathcal{H}_d$ having $\sigma(\mathcal{H}_d)<3$ and $d-\frac{5}{d+1}\le \lambda_2(\mathcal{H}_d)<d-\frac{5}{d+3}$.

\begin{proof}[Proof of Theorem \ref{sigma3}]
We prove the contrapositive. We assume that $G$ does not contain $3$-edge-disjoint spanning trees and we prove that $\lambda_2(G)\geq d-\frac{5}{d+1}$. 

By Nash-Williams/Tutte Theorem, there exists a partition of the vertex set of $G$ into $t$ subsets $X_1,\dots, X_t$ such that 
$$
\sum_{1\leq i<j\leq t}e(X_i,X_j)\leq 3(t-1)-1=3t-4.
$$
It follows that $\sum_{i=1}^{t}r_i\leq 6t-8$, where $r_i=e(X_i,V\setminus X_i)$.

If $r_i\leq 2$ for some $i$ between $1$ and $t$, then by results of \cite{Ci}, it follows that $\lambda_2(G)\geq d-\frac{4}{d+3}>d-\frac{5}{d+1}$. 

Assume $r_i\geq 3$ for each $1\leq i \leq t$ from now on. Let $a=|\{i:1\leq i\leq t, r_i=3\}|, b=|\{i:1\leq i\leq t, r_i=4\}|$ and $c=|\{i:1\leq i \leq t, r_i=5\}|$. We get that
$$
6t-8\geq r_1+\dots +r_t\geq 3a+4b+5c+6(t-a-b-c)
$$
which implies
\begin{equation}\label{abc1}
3a+2b+c\geq 8.
\end{equation}

If for some $1\leq i<j\leq t$, we have $e(X_i,X_j)=0$ and $\max(r_i,r_j)\leq 5$, then eigenvalue interlacing \eqref{disjoint} implies $\lambda_2(G)\geq \lambda_2(G[X_i\cup X_j])\geq \min (\lambda_2(G[X_i]),\lambda_2(G[X_j]))\geq \min(\overline{d}(X_i),\overline{d}(X_j))\geq d-\frac{5}{d+1}$ and we would be done. Thus, we may assume that 
\begin{equation}\label{nonempty}
e(X_i,X_j)\geq 1
\end{equation}
for every $1\leq i<j\leq t$ with $\max(r_i,r_j)\leq 5$. Similar arguments imply for example that 
\begin{equation}\label{abc2}
a+b+c\leq 6, a+b\leq 5, a\leq 4.
\end{equation}

For the rest of the proof, we have to consider the following cases:

{\bf Case 1.} $a\geq 2$.

The inequality $\sum_{1\leq i<j\leq t}e(X_i,X_j)\leq 3t-4$ implies $t\geq 3$.

As $a=|\{i:r_i=3\}|$, assume without loss of generality that $r_1=r_2=3$. 
Because $G$ is connected, this implies $e(X_1,X_2)<3$. Otherwise, $e(X_1\cup X_2,V(G)\setminus (X_1\cup X_2))=0$, contradiction.

If $e(X_1,X_2)=2$, then $e(X_1\cup X_2,V(G)\setminus (X_1\cup X_2))=2$. Using the results in \cite{Ci}, this implies $\lambda_2(G)\geq d-\frac{4}{d+2}>d-\frac{5}{d+1}$ and finishes the proof. 

%Originally used the trace argument on the following case

Thus, we may assume $e(X_1,X_2)=1$. Let $Y_3=V(G)\setminus (X_1\cup X_2)$. As $r_1=r_2=3$, we deduce that $e(X_1,Y_3)=e(X_2,Y_3)=2$. This means $e(Y_3,V(G)\setminus Y_3)=4$ and since $d\geq 6$, this implies $n'_3:=|Y_3|\geq d+1$. 

Consider the partition of the vertex set of $G$ into three parts: $X_1, X_2$ and $Y_3$. The quotient matrix of this partition is 
\begin{equation*}
B_3=
\begin{bmatrix}
d-\frac{3}{n_1} & \frac{1}{n_1} & \frac{2}{n_1}\\
\frac{1}{n_2} & d-\frac{3}{n_2} & \frac{2}{n_2}\\
\frac{2}{n'_3}&\frac{2}{n'_3}& d-\frac{4}{n'_3}
\end{bmatrix}.
\end{equation*}

%The largest eigenvalue of $B_3$ is $d$ and the second eigenvalue of $B_3$ equals 
%$$d-\frac{3}{2 n_1}-\frac{3}{2 n_2}-\frac{2}{n_3'}+\frac{\sqrt{16 n_1^2 n_2^2-8 n_1^2 n_2 n_3'-8 n_1 n_2^2 n_3'+9 n_1^2 {n_3'}^2-14 n_1 n_2 {n_3'}^2+9 n_2^2 %{n_3'}^2}}{2 n_1 n_2 n_3'},$$
%which is greater than $d-\frac{3}{2 n_1}-\frac{3}{2 n_2}-\frac{2}{n_3'}$. Thus, 

The largest eigenvalue of $B_3$ is $d$.  Eigenvalue interlacing and $n_1,n_2, n_3'\geq d+1$  imply 
\begin{align*}
\lambda_2(G)&\geq \lambda_2(B_3)\geq \frac{tr(B_3)-d}{2}\geq d-\frac{3}{2 n_1}-\frac{3}{2 n_2}-\frac{2}{n_3'}\\
&\geq d-\frac{3}{2(d+1)}-\frac{3}{2(d+1)}-\frac{2}{d+1}=d-\frac{5}{d+1}.
\end{align*}

This finishes the proof of this case.

{\bf Case 2.} $a=1$.

Inequalities \eqref{abc1} and \eqref{abc2} imply $2b+c\geq 5\geq b+c$. Actually, because we assumed that $e(X_i,X_j)\geq 1$ for every $1\leq i\neq j \leq t$ with $\max(r_i,r_j)\leq 5$, we deduce that $b+c\leq 3$. Otherwise, if $b+c\geq 4$, then there exists $i\neq j$ such that $r_i=3, r_j\in \{4,5\}$ and $e(X_i,X_j)=0$.

The only solution of the previous inequalities is $b=2$ and $c=1$. Without loss of generality, we may assume $r_1=3, r_2=r_3=4$ and $r_4=5$. Using the facts of the previous paragraph, we deduce that $e(X_1,X_j)=1$ for each $2\leq j\leq 4$ and $e(X_i,X_j)\geq 1$ for each $2\leq i\neq j\leq 4$.

If $e(X_2,X_3)\geq 3$, then $e(X_2,X_4)=0$ which is a contradiction with the first paragraph of this subcase.

If $e(X_2,X_3)=2$, then $t\geq 5$ and $e(X_1\cup X_2\cup X_3\cup X_4, V(G)\setminus (X_1\cup X_2\cup X_3\cup X_4))=2$. Using results from \cite{Ci}, it follows that $\lambda_2(G)\geq d-\frac{4}{d+2}>d-\frac{5}{d+1}$ which finishes the proof of this subcase.

If $e(X_2,X_3)=1$, then there are some subcases to consider:

\begin{enumerate}

\item If $e(X_2,X_4)=e(X_3,X_4)=1$, then $t\geq 5$. If $Y_5:=V(G)\setminus (X_1\cup X_2\cup X_3\cup X_4)$, then $e(X_4,Y_5)=2, e(X_3,Y_5)=e(X_2,Y_5)=1$. These facts imply $e(Y_5,V(G)\setminus Y_5)=4$ and $e(X_1,Y_5)=0$. As $d\geq 6$, it follows that $n'_5:=|Y_5|\geq d+1$. Eigenvalue interlacing \eqref{disjoint} implies 
\begin{align*}
\lambda_2(G)&\geq \lambda_2(G[X_1\cup Y_5])\geq \min(\lambda_1(G[X_1]),\lambda_1(G[Y_5]))\geq \min(\overline{d}(X_1),\overline{d}(Y_5))\\
&\geq \min\left(d-\frac{3}{n_1},d-\frac{4}{n'_5}\right)\geq d-\frac{4}{d+1}>d-\frac{5}{d+1}
\end{align*}
which finishes the proof of this subcase.

\item If $e(X_2,X_4)=2$ and $e(X_3,X_4)=1$, then $t\geq 5$. If $Y_5:=V(G)\setminus (X_1\cup X_2\cup X_3\cup X_4)$, then $e(X_4,Y_5)=e(X_3,Y_5)=1$.  These facts imply $e(Y_5,V(G)\setminus Y_5)=2$. Using results in \cite{Ci}, we obtain $\lambda_2(G)>d-\frac{4}{d+2}>d-\frac{5}{d+1}$ which finishes the proof of this subcase.

\item If $e(X_2,X_4)=1$ and $e(X_3,X_4)=2$, then the proof is similar to the previous case and we omit the details.

\item If $e(X_2,X_4)=e(X_3,X_4)=2$, then $t=4$. Consider the partition of the vertex set of $G$ into three parts: $X_1, X_2, X_3\cup X_4$. The quotient matrix of this partition is
\begin{equation*}
C_3=\begin{bmatrix}
d-\frac{3}{n_1} & \frac{1}{n_1} & \frac{2}{n_1}\\
\frac{1}{n_2} & d-\frac{4}{n_2} & \frac{3}{n_2} \\
\frac{3}{n'_3} & \frac{2}{n'_3} & d-\frac{5}{n'_3}
\end{bmatrix}
\end{equation*}
where $n'_3=|X_3\cup X_4|=|X_3|+|X_4|\geq 2(d+1)$.

%Originally used the trace argument on the following case

%The largest eigenvalue of $B_3$ is $d$ and the second eigenvalue of $B_3$ equals 
%$$d-\frac{3}{2 n_1}-\frac{2}{n_2}-\frac{5}{2 n_3'}+\frac{\sqrt{25 n_1^2 n_2^2-16 n_1^2 n_2 n_3'-6 n_1 n_2^2 n_3'+16 n_1^2 {n_3'}^2-20 n_1 n_2 {n_3'}^2+9 n_2^2 {n_3'}^2}}{2 n_1 n_2 n_3'},$$
%which is greater than $d-\frac{3}{2 n_1}-\frac{2}{ n_2}-\frac{5}{2n_3'}$. Thus, 

The largest eigenvalue of $C_3$ is $d$.  Eigenvalue interlacing and $n_1,n_2\ge d+1$, $n_3'\ge 2(d+1)$  imply 
\begin{align*}
\lambda_2(G)&\geq \lambda_2(C_3)\geq \frac{tr(C_3)-d}{2}\geq d-\frac{3}{2 n_1}-\frac{2}{ n_2}-\frac{5}{2n_3'} \\
&\geq d-\frac{3}{2(d+1)}-\frac{2}{d+1}-\frac{5}{4(d+1)}\ge d-\frac{4.75}{d+1}> d-\frac{5}{d+1}.
\end{align*}

\end{enumerate}

{\bf Case 3.} $a=0$.

Inequalities \eqref{abc1} and \eqref{abc2} imply $2b+c\geq 8, b+c\leq 6, b\leq 5$. 

If $b=0$, then $c\geq 8$ and $c\leq 6$ which is a contradiction that finishes the proof of this subcase.

%which implies that there exist $i\neq j$ such that $e(X_i,X_j)=0$ and $r_i=r_j=5$. This contradicts the assumption \eqref{nonempty} and finishes our proof.

If $b=1$, then $c\geq 6$ and $c\leq 5$ which is a contradiction that finishes the proof of our subcase.

%which implies that there exist $i\neq j$ such that $e(X_i,X_j)=0$ and $r_i\in \{4,5\}$ and $r_j\in \{4,5\}$. This contradicts the assumption \eqref{nonempty} and finishes the proof.

If $b=2$, then $c\geq 4$ which implies that there exists $i\ne j$ such that $e(X_{i},X_{j})=0$ and $r_{i}=4$ and $r_{j}\in \{4,5\}$. This contradicts  \eqref{nonempty} and finishes the proof. 

%Use a different letter to define the quotient matrix?
%Case of b=3

%Originally used the trace argument on the following case

If $b=3$, then $c\geq 2$. Assume that $c=2$ first. Without loss of generality, assume $r_{1}=r_{2}=r_{3}=4$ and $r_{4}=r_{5}=5$.  \eqref{nonempty} implies that $e(X_i,X_j)=1$ for each $1\leq i<j\leq 5$ except when $i=4$ and $j=5$ where $e(X_4,X_5)=2$.

Consider the partition of the vertex set of $G$ into three parts: $X_{1}, X_{2}\cup X_{3}$, and $X_{4}\cup X_{5}$.  The quotient matrix of this partition is
$$D_{3}=\begin{bmatrix}
d-\frac{4}{n_{1}}&& \frac{2}{n_{1}} &&\frac{2}{n_{1}}\\
\frac{2}{n_{2}'}&&d-\frac{6}{n_{2}'}&&\frac{4}{n_{2}'}\\
\frac{2}{n_{3}'}&&\frac{4}{n_{3}'}&&d-\frac{6}{n_{3}'}
\end{bmatrix}$$
where $n'_2=|X_2\cup X_3|=|X_2|+|X_3|\geq 2(d+1)$ and $n'_3=|X_4\cup X_5|=|X_4|+|X_5|\geq 2(d+1)$.

The largest eigenvalue of $D_3$ is $d$.  Eigenvalue interlacing and $n_1\ge d+1$, $n_2',n_3'\ge 2(d+1)$  imply 
\begin{align*}
\lambda_2(G)&\geq \lambda_2(D_3)\geq \frac{tr(D_3)-d}{2}\geq d-\frac{2}{n_1}-\frac{3}{n_2'}-\frac{3}{n_3'}\\
&\geq d-\frac{2}{d+1}-\frac{3}{2(d+1)}-\frac{3}{2(d+1)}=d-\frac{5}{d+1}.
\end{align*}

This finishes the proof of this subcase.

If $c\geq 3$, then since $b=3$, it follows that there exists $i\ne j$ such that $e(X_{i},X_{j})=0$ and $r_{i}=4$ and $r_{j}\in \{4,5\}$. This contradicts \eqref{nonempty} and finish the proof of this subcase.

%Case of b=4
%Originally used the trace argument on the following case

If $b=4$, we have inequality \eqref{abc2} implies $c\leq 2$. If $c=2$, then there exist $i\neq j$ such that $e(X_i,X_j)=0, r_i=4$ and $r_j\in \{4,5\}$. This contradicts \eqref{nonempty} and finishes the proof of this subcase.

Suppose $c=0$.  Without loss of generality, assume that $r_i=4$ for $1\leq i\leq 4$. If $t=4$, then \eqref{nonempty} implies that the graph $G$ is necessarily of the form shown in Figure  \ref{b=4c=0}.

\begin{figure}[h]
\centering
\includegraphics[scale=0.45]{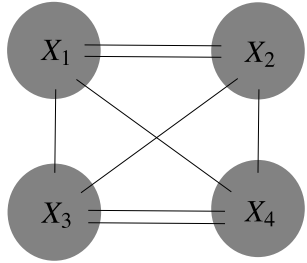}
\caption{The structure of $G$ when $a=0$, $b=4$, $c=0$, and $t=4$.}
\label{b=4c=0}
\end{figure}

Consider the partition of the vertex set of $G$ into three parts: $X_{1}, X_{2},X_{3}\cup X_{4}$.  The quotient matrix of this partition is
$$E_{3}=\begin{bmatrix}
d-\frac{4}{n_{1}}&& \frac{2}{n_{1}} &&\frac{2}{n_{1}}\\
\frac{2}{n_{2}}&&d-\frac{4}{n_{2}}&&\frac{2}{n_{2}}\\
\frac{2}{n_{3}'}&&\frac{2}{n_{3}'}&&d-\frac{4}{n_{3}'}
\end{bmatrix}$$
where $n'_3=|X_3\cup X_4|=|X_3|+|X_4|\geq 2(d+1)$.

The largest eigenvalue of $E_3$ is $d$.
% and the second eigenvalue of $D_3$ equals 
%$$d-\frac{2}{n_1}-\frac{2}{n_2}-\frac{2}{n_3}+\frac{2 \sqrt{n_1^2 n_2^2-n_1^2 n_2 n_3'-n_1 n_2^2 n_3'+n_1^2 {n_3'}^2-n_1 n_2 {n_3'}^2+n_2^2 {n_3'}^2}}{n_1 %n_2 n_3'},$$
%which is greater than $d-\frac{2}{n_1}-\frac{2}{n_2}-\frac{2}{n_3'}$. 
Eigenvalue interlacing and $n_1,n_2 \ge d+1$, $n_3'\ge 2(d+1)$  imply 
\begin{align*}
\lambda_2(G)&\geq \lambda_2(E_3)\geq \frac{tr(E_3)-d}{2}\geq d-\frac{2}{n_1}-\frac{2}{n_2}-\frac{2}{n_3'}\\
& \geq d-\frac{2}{d+1}-\frac{2}{d+1}-\frac{2}{2(d+1)}= d-\frac{5}{d+1}.
\end{align*}

%Originally used the trace argument on the following case

If $t\geq 5$, then there are two possibilities: either $e(X_i,X_j)=1$ for each $1\leq i<j\leq 4$ or without loss of generality, $e(X_i,X_j)=1$ for each $1\leq i<j\leq 4$ except for $i=1$ and $j=2$ where $e(X_1,X_2)=2$.

In the first situation, if $Y_5:=V(G)\setminus (X_1\cup X_2\cup X_3\cup X_4)$, then $e(X_i,Y_5)=1$ for each $1\leq i\leq 4$ and thus, $e(Y_5, V(G)\setminus Y_5)=4$. This implies $|Y_5|\geq d+1$. Consider the partition of $V(G)$ into three parts $X_1, X_2\cup X_3, X_4\cup Y_5$. The quotient matrix of this partition is 
$$
F_3=
\begin{bmatrix}
d-\frac{4}{n_1} & \frac{2}{n_1} & \frac{2}{n_1}\\
\frac{2}{n'_2} & d-\frac{6}{n'_2} & \frac{4}{n'_2}\\
\frac{2}{n'_3} & \frac{4}{n'_3} & d-\frac{6}{n'_3}
\end{bmatrix}
$$
where $n'_2=|X_2\cup X_3|=|X_2|+|X_3|\geq 2(d+1)$ and $n'_3=|X_4\cup Y_5|=|X_4|+|Y_5|\geq 2(d+1)$.

The largest eigenvalue of $F_3$ is $d$.
% and the second eigenvalue of $E_3$ equals 
%$$d-\frac{2}{n_1}-\frac{3}{n_2'}-\frac{3}{n_3'}+\frac{\sqrt{9 n_1^2 {n_2'}^2-2 n_1^2 n_2' n_3'-8 n_1 {n_2'}^2 n_3'+9 n_1^2 {n_3'}^2-8 n_1 n_2' {n_3'}^2+4 {n_2'}^2 {n_3'}^2}}{n_1 n_2' n_3'},$$
%which is greater than $d-\frac{2}{n_1}-\frac{3}{n_2'}-\frac{3}{n_3'}$. Thus, 
Eigenvalue interlacing and $n_1 \ge d+1$, $n_2',n_3'\ge 2(d+1)$ imply 
\begin{align*}
\lambda_2(G)&\geq \lambda_2(F_3)\geq \frac{tr(F_3)-d}{2}\geq d-\frac{2}{n_1}-\frac{3}{n_2'}-\frac{3}{n_3'}\\
&\geq d-\frac{2}{d+1}-\frac{3}{2(d+1)}-\frac{3}{2(d+1)}=d-\frac{5}{d+1},
\end{align*}
which finishes the proof of this subcase.

\begin{figure}[h]
\centering
\includegraphics[scale=0.7]{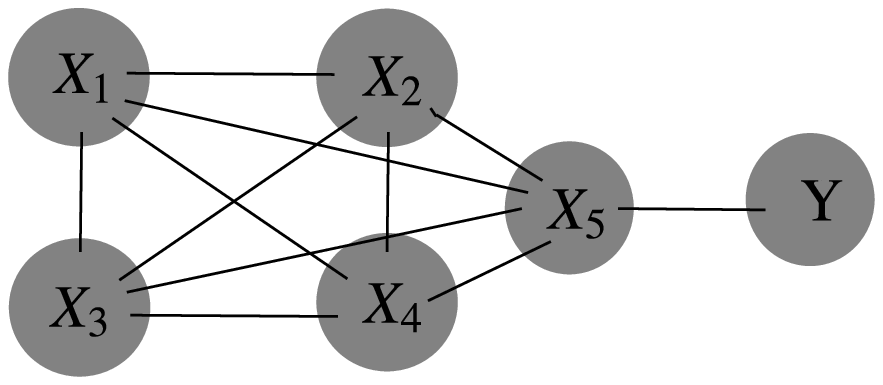}
\caption{The structure of $G$ when $a=0$, $b=4$, $c=1$, and $t\ge 5$.}
\label{b=4c=1}
\end{figure}

In the second situation, if $Y_5:=V(G)\setminus (X_1\cup X_2\cup X_3\cup X_4)$ then $e(X_1,Y_5)=e(X_2,Y_5)=0$ and $e(X_3,Y_5)=e(X_4,Y_5)=1$. This implies $e(Y_5,V(G)\setminus Y_5)=2$. By results of \cite{Ci}, we deduce that $\lambda_2(G)\geq d-\frac{4}{d+2}>d-\frac{5}{d+1}$ which finishes the proof of this subcase.

Assume that $c=1$. Without loss of generality, assume that $r_{i}=4$ for $1\le i\le 4$, and $r_{5}=5$. Our assumption \eqref{nonempty} implies that the graph is necessarily of the form shown in Figure \ref{b=4c=1},  where $Y$ is a component that necessarily joins to $X_{5}$.  By results of \cite{Ci}, it follows that $\lambda_2(G)>d-\frac{2}{d+4}>d-\frac{5}{d+1}$ and this finishes the proof of this case.

If $b=5$,  then $c=0$ by  \eqref{nonempty}. Also, by \eqref{nonempty}, it follows that $t=5$ and $e(X_i,X_j)=1$ for each $1\leq i<j\leq 5$. Consider the partition of the vertex set of $G$ into three parts: $X_{1}, X_{2}\cup X_{3}, X_{4}\cup X_{5}$.  The quotient matrix of this partition is
$$G_{3}=\begin{bmatrix}
d-\frac{4}{n_{1}}&& \frac{2}{n_{1}} &&\frac{2}{n_{1}}\\
\frac{2}{n_{2}'}&&d-\frac{6}{n_{2}'}&&\frac{4}{n_{2}'}\\
\frac{2}{n_{3}'}&&\frac{4}{n_{3}'}&&d-\frac{6}{n_{3}'}
\end{bmatrix},$$
which is identical to the quotient matrix $F_3$ in a previous case, which yields $\lambda_2(G)\ge   d-\frac{5}{d+1}.$

%Eigenvalue interlacing, $\min(n_{2}',n_{3}')\ge 2(d+1)$, and $n_{1}\ge d+1$ imply
%\begin{align*}
%\lambda_2(G)&\geq \lambda_2(F_3)\geq \frac{\tr(F_3)-d}{2}=\frac{2d-\frac{4}{n_1}-\frac{6}{n_2'}-\frac{6}{n_3'}}{2}\\
%&\geq \frac{2d-\frac{4}{d+1}-\frac{6}{2(d+1)}-\frac{6}{2(d+1)}}{2} \geq d-\frac{5}{d+1}.
%\end{align*}

%b>5
If $b>5$, then \eqref{nonempty} will yield a contradiction. This finishes the proof of Theorem \ref{sigma3}.
\end{proof}

%Example of a graph showing the bound is best

%Approximately page 11

We show that our bound is essentially best possible by presenting a family of $d$-regular graphs $\mathcal{H}_d$ with $d-\frac{5}{d+1}\le \lambda_2(\mathcal{H}_d)<d-\frac{5}{d+3}$ and $\sigma(\mathcal{H}_d)=2$, for every $d\geq 6$.

For $d\geq 6$, consider the graph obtained from $K_{d+1}$ by removing two disjoint edges. Consider now 5 vertex disjoint copies $H_1,H_2,H_3, H_{4}, H_{5}$ of this graph. For each copy $H_i, 1\leq i\leq 5$, denote the two pairs of non-adjacent vertices in $H_i$ by $a_i, c_i$ and $b_i, d_i$. Let $\mathcal{H}_d$ be the $d$-regular graph whose vertex set is $\cup_{i=1}^{5}V(H_i)$ and whose edge set is the union $\cup_{i=1}^{5}E(H_i)$ with the following set of $10$ edges:
$$\{b_{1}a_{2},b_{2}a_{3},b_{3}a_{4},b_{4}a_{5},b_5a_1, c_{1}d_{3},c_{3}d_{5},c_{5}d_{2},c_{2}d_{4},c_{4}d_{1}\}.$$

\begin{figure}[h]
\centering
\includegraphics[scale=0.4]{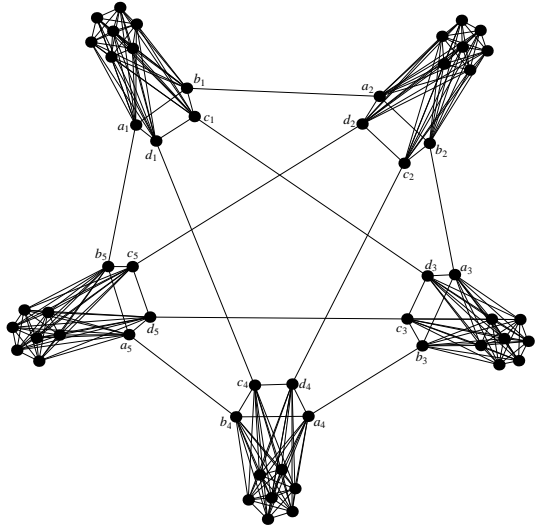}
\caption{The 10-regular graph $\mathcal{H}_{10}$ with $\sigma\left(\mathcal{H}_{10}\right)=2$ and $9.545 \approx 10-\frac{5}{10+1} <\lambda_2\left({H}_{10}\right)\approx 9.609 <10-\frac{5}{10+3}\approx 9.615.$ }
\end{figure}

The graph $\mathcal{H}_d$ is $d$-regular and has $5(d+1)$ vertices. The partition of the vertex set of $\mathcal{H}_d$ into the five parts: $V(H_1), V(H_2), V(H_3), V(H_{4}), V(H_{5})$ has the property that the number of edges between the parts equals $10<12=3(5-1)$. By Nash-Williams/Tutte Theorem, this implies $\sigma(\mathcal{H}_d)<3$.

For $d\geq 6$, denote by $\gamma_d$ the largest root of the polynomial 
\begin{align*}
&x^{10}+(8-2 d) x^9+(d^2-16 d+30) x^8+(8 d^2-50 d+58) x^7+(20 d^2-66 d+36) x^6+\\
&(8 d^2+18 d-70) x^5+(-29 d^2+140 d-146) x^4+(-20 d^2+57 d-21) x^3+(14 d^2-83 d+109) x^{2}+\\
&(4 d^2-13 d+5) x-d^2+5 d-5.
\end{align*}

\begin{lemma}
For every integer $d\geq 6$, the second largest eigenvalue of $\mathcal{H}_d$ is $\gamma_d$.
\end{lemma}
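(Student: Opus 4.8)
The plan is to follow the exact strategy used for $\mathcal{G}_d$ in the previous section: exhibit an equitable partition of $V(\mathcal{H}_d)$, compute the characteristic polynomial of its quotient matrix, and then account for all remaining eigenvalues so that the second-largest eigenvalue of $\mathcal{H}_d$ is pinned down to a single root of the displayed degree-$10$ polynomial. First I would choose the equitable partition of $V(\mathcal{H}_d)$ into $15$ parts: the five ``interiors'' $V(H_i)\setminus\{a_i,b_i,c_i,d_i\}$, each of size $d-3$, together with the ten singletons $\{a_i\},\{b_i\},\{c_i\},\{d_i\}$ — wait, that is $20$ singletons, so in fact the four marked vertices per copy across five copies give $5+20=25$ parts. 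One checks this is equitable because within each $H_i$ every interior vertex sees all $d-3$ other interior vertices plus all four marked vertices, while $a_i$ sees the $d-3$ interiors, the two marked vertices $b_i,d_i$ it remains adjacent to inside $H_i$ (recall $a_i$ is non-adjacent only to $c_i$), and its one external neighbor; similarly for $b_i,c_i,d_i$. The quotient matrix is then a fixed $25\times 25$ matrix with polynomial entries in $d$, and its characteristic polynomial factors as $(x-d)$ times a lower-degree piece; since the partition is equitable, all $25$ of its eigenvalues are eigenvalues of $\mathcal{H}_d$, with $d$ being the (simple, by connectedness) spectral radius.

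Next I would identify the remaining $5(d+1)-25 = 5d-20 = 5(d-4)$ eigenvalues. As in the $\mathcal{G}_d$ proof, these should all equal $-1$ and be supported on differences of interior vertices: for each copy $H_i$, fix an interior vertex $u_i^{(0)}$ and, for each of the other $d-4$ interior vertices $u_i^{(\ell)}$, form the vector with $+1$ at $u_i^{(0)}$, $-1$ at $u_i^{(\ell)}$, and $0$ elsewhere; since the interior of $H_i$ induces a clique $K_{d-3}$ and these vertices have identical neighbourhoods outside the interior, such a vector is an eigenvector of the adjacency matrix for eigenvalue $-1$ (the adjacency action on a ``$+1,-1,0$'' difference inside a clique returns $-1$ times the difference plus a constant that cancels). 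This gives $5(d-4)$ linearly independent eigenvectors for $-1$, which together with the $25$ quotient-matrix eigenvalues exhausts $\mathbb{R}^{5(d+1)}$, so the full spectrum of $\mathcal{H}_d$ is determined. Having the full spectrum, $\lambda_2(\mathcal{H}_d)$ is the largest eigenvalue strictly below $d$; the claim is that among the roots of the quotient-matrix characteristic polynomial, the relevant factor — after dividing out $(x-d)$ and any power of $(x+1)$ — is precisely the displayed degree-$10$ polynomial, whose largest root $\gamma_d$ exceeds $-1$ and is the second eigenvalue.

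The key bookkeeping steps, in order, are: (1) verify the partition is equitable and write down the $25\times 25$ quotient matrix; (2) compute its characteristic polynomial and check it factors as $(x-d)(x+1)^{k}\cdot Q_{10}(x)^{?}\cdots$ matching the displayed polynomial (the symmetry of $\mathcal{H}_d$ under the cyclic ``rotation'' $i\mapsto i+1$ of the five copies, which permutes the two $5$-cycles of cross edges, should make the non-trivial factor appear with multiplicity reflecting that the quotient of the quotient by this $\mathbb{Z}_5$-action is a small matrix — so really I would first pass to the $5$-part partition $V(H_1),\dots,V(H_5)$, use the circulant structure of its quotient matrix, and diagonalize via fifth roots of unity, which is likely how the degree-$10$ polynomial arises as the ``non-principal'' piece); (3) produce the $5(d-4)$ explicit $-1$-eigenvectors and confirm linear independence and total count $5(d+1)$; (4) conclude $\lambda_2(\mathcal{H}_d)=\gamma_d$ by checking $\gamma_d>-1$ and $\gamma_d<d$, which will follow once the next lemma localizes $\gamma_d$ in $[d-\frac{5}{d+1},\,d-\frac{5}{d+3})$.

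I expect the main obstacle to be step (2): honestly reducing the $25\times 25$ (or, after exploiting the $\mathbb{Z}_5$ symmetry, the handful of $5\times 5$) characteristic polynomial computations to the stated degree-$10$ polynomial, since the cross edges form two different $5$-cycles on the copies ($1\!-\!2\!-\!3\!-\!4\!-\!5$ via the $b_ia_{i+1}$ edges and $1\!-\!3\!-\!5\!-\!2\!-\!4$ via the $c_i d_j$ edges), so the relevant small matrices are not quite circulant in a single variable and the Fourier decomposition over $\mathbb{Z}_5$ must be set up carefully before the characteristic polynomial factors cleanly. This is a finite, if tedious, linear-algebra computation best delegated to a symbolic algebra system, exactly as the authors evidently did for the polynomial coefficients displayed above; the conceptual content is entirely in setting up the right equitable/symmetric partition, after which everything is mechanical.
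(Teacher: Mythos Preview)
Your approach is essentially the paper's: the same $25$-part equitable partition (five interiors of size $d-3$ plus twenty singletons), and the same interior-difference vectors supplying the extra $-1$-eigenvalues. Two small corrections to your bookkeeping: an interior vertex sees $d-4$ (not $d-3$) other interior vertices; and the quotient characteristic polynomial actually factors as $(x-d)(x-1)(x+1)^2(x+3)\,P_{10}(x)^2$, so there are additional simple eigenvalues $1$ and $-3$ beyond the $(x-d)(x+1)^k$ shape you anticipated --- in particular, at the end you must check $\gamma_d>1$, which does follow from the next lemma as you note.

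The only methodological differences are that the paper computes the $25\times25$ characteristic polynomial by brute force in Mathematica rather than exploiting the $\mathbb{Z}_5$ rotation you propose, and it exhibits $5d-18$ explicit $-1$-eigenvectors (your $5(d-4)$ interior differences plus two more that account for the quotient's $(x+1)^2$ factor) instead of invoking the orthogonal-complement dimension count; your version of that step is slightly cleaner.
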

\begin{proof}
Consider the following partition of the vertex set of $\mathcal{H}_d$ into 25 parts: 5 parts of the form $V(H_i)\setminus \{a_i,b_i,c_{i},d_{i}\},$ $i=1,2,3,4,5$.  The remaining 20 parts consist of the 20 individual vertices $\{a_{i}\}, \{b_{i}\}, \{c_{i}\},\{d_{i}\}$, $i=1,2,3,4,5$.  This partition is equitable and the characteristic polynomial of its quotient matrix (which is described in Section \ref{25quotient}) is
\begin{align*}
P_{25}(x)&=(x-d)(x-1)(x+1)^{2}(x+3)[x^{10}+(8-2 d) x^9+(d^2-16 d+30) x^8+\\
&(8 d^2-50 d+58) x^7+(20 d^2-66 d+36) x^6+(8 d^2+18 d-70) x^5+\\
&(-29 d^2+140 d-146) x^4+(-20 d^2+57 d-21) x^3+(14 d^2-83 d+109) x^2\\
&+(4 d^2-13 d+5) x-d^2+5 d-5]^{2}.
\end{align*}

Let $\lambda_2\geq \lambda_3\geq...\ge \lambda_{11}$ denote the solutions of the degree 10 polynomial $P_{10}(x)$. Because the partition is equitable, it follows that these 10 solutions, $d, 1,-1,$ and $-3$ are eigenvalues of $\mathcal{H}_d$, including multiplicity. 

We claim the spectrum of $\mathcal{H}_d$ is  
\begin{equation}\label{spectrumHd}
d^{(1)}, 1^{(1)},-3^{(1)},-1^{(5d-18)}, \lambda_i^{(2)} \hspace{.2in} \text{for } i=2,3,...,11. 
\end{equation}
It suffices to obtain $5d-18$ linearly independent eigenvectors corresponding to $-1$. Consider two distinct vertices $u_{1}^{1}$ and $u_{2}^{1}$ in $V(H_1)\setminus \{a_1,b_1,c_{1},d_{1}\}$. Define a vector where the entry corresponding to $u_{1}^{1}$ is 1, the entry corresponding to $u_{2}^{1}$ is $-1$, and all other entries are $0$. This is an eigenvector corresponding to the eigenvalue $-1$. We can create $d-4$ eigenvectors by letting $u_{2}^{1}$ to be each of the $d-4$ vertices in $V(H_1)\setminus \{a_1,b_1,c_{1},d_{1},u_{1}^1\}$. This can also be applied to $2$ vertices $u_{1}^{i},u_{2}^{i}$ in $V(H_i)\setminus \{a_i,b_i,c_{i},d_{i}\},$ for $i=2,3,4,5$. This way, we obtain a total of $5d-20$ linearly independent eigenvectors corresponding to the eigenvalue $-1$.

Furthermore, define a vector whose entry at some fixed vertex $u_{1}^{i}\in V(H_i)\setminus \{a_i,b_i,c_i,d_i\}$ is $-2$, whose entries at $a_{i}$ and $d_{i}$ are $1$, for each $1\leq i\leq 5$ and whose remaining entries are $0$. Define another vector whose entries at a fixed vertex $u_{1}^{i}\in V(H_i)\setminus \{a_i,b_i,c_i,d_i\}$ is $-2$, whose entries at $b_{i}$ and $c_{i}$ are $1$, for each $1\leq i\leq 5$ and whose remaining entries are $0$. These last two vectors are also eigenvectors corresponding to the eigenvalue $-1$. It is easy to check that all these $5d-18$ vectors we have constructed are linearly independent eigenvectors corresponding to the eigenvalue $-1$. By obtaining the entire spectrum of $\mathcal{H}_{d}$, we conclude that the second largest eigenvalue of $\mathcal{H}_{d}$ is $\gamma_d$.
\end{proof}

\begin{lemma}\label{gammad}
For every integer $d\geq 6$,
$$
d-\frac{5}{d+1}\le\gamma_d<d-\frac{5}{d+3}.
$$
\end{lemma}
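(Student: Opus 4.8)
The plan is to follow the template used for the $\theta_d$ lemma: pin down the largest root $\gamma_d$ of the degree‑$10$ polynomial $P_{10}$ by evaluating $P_{10}$ at the two endpoints $d-\frac{5}{d+1}$ and $d-\frac{5}{d+3}$ and by controlling its monotonicity near the Perron value $d$. Two preliminary facts, both coming from the previous lemma, will be used throughout: the ten roots of $P_{10}$ are among the eigenvalues of $\mathcal{H}_d$, hence real, and their maximum is $\gamma_d=\lambda_2(\mathcal{H}_d)$. Since $\mathcal{H}_d$ is connected and $d$-regular we have $\gamma_d<d$, so writing $P_{10}$ as the product of its (real) linear factors gives $P_{10}(x)>0$ for every $x\ge d$; consequently the upper bound on $\gamma_d$ only requires controlling $P_{10}$ on the bounded interval $[\,d-\frac{5}{d+3},\,d\,)$.

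\emph{Lower bound.} Here I would observe that $\gamma_d\ge d-\frac{5}{d+1}$ is already a consequence of Theorem \ref{sigma3}. Indeed, the partition $V(\mathcal{H}_d)=V(H_1)\cup\cdots\cup V(H_5)$ has exactly $10<12=3(5-1)$ crossing edges, so $\sigma(\mathcal{H}_d)<3$ by Nash-Williams/Tutte; since $\mathcal{H}_d$ is $d$-regular with $d\ge 6$, the contrapositive of Theorem \ref{sigma3} forces $\lambda_2(\mathcal{H}_d)\ge d-\frac{5}{d+1}$, i.e. $\gamma_d\ge d-\frac{5}{d+1}$. In closer parallel with the $\theta_d$ proof, one can instead compute $P_{10}\!\left(d-\frac{5}{d+1}\right)$ directly: it is a rational function of $d$ with denominator $(d+1)^{10}$, and the goal is to present its numerator as a polynomial in $d$ that is $\le 0$ for all $d\ge 6$. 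Since $P_{10}$ is monic and $P_{10}(x)\to+\infty$, an endpoint value $\le 0$ forces a root to its right, whence $\gamma_d\ge d-\frac{5}{d+1}$.

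\emph{Upper bound.} By the preliminary remarks it is enough to show that $P_{10}$ has no root in $[\,d-\frac{5}{d+3},\,d\,)$. I would deduce this from two ingredients: (a) $P_{10}\!\left(d-\frac{5}{d+3}\right)>0$ for $d\ge 6$ — again a sign check on a rational function of $d$, now with denominator $(d+3)^{10}$ — and (b) $P_{10}$ is increasing on $[\,d-\frac{5}{d+1},\,d\,]$, i.e. $P_{10}'>0$ there. Granting (a), (b), and the lower-bound evaluation $P_{10}\!\left(d-\frac{5}{d+1}\right)\le 0$: on $[\,d-\frac{5}{d+1},\,d\,]$ the function $P_{10}$ is increasing, is $\le 0$ at the left endpoint, and is $>0$ at $d-\frac{5}{d+3}$, so it has exactly one root $x^\ast$ there and $x^\ast\in[\,d-\frac{5}{d+1},\,d-\frac{5}{d+3}\,)$; as $P_{10}$ has no root in $[d,\infty)$ either, $x^\ast$ is its largest root, i.e. $x^\ast=\gamma_d$. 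This yields $d-\frac{5}{d+1}\le\gamma_d<d-\frac{5}{d+3}$.

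The one genuinely technical ingredient is (b). In the cubic case $P_3'$ was a quadratic whose larger root $\tfrac13(2d-1)$ could be written down in closed form and checked to lie to the left of $d-\frac{3}{d+3}$; here $P_{10}'$ has degree $9$ with coefficients quadratic in $d$, so no such tidy formula is available, and this is where I expect the argument to be least transparent. I would settle it with a computer-algebra verification — for instance, substituting $x=d-\frac{5}{d+1}+s$ into $P_{10}'$, clearing denominators, and checking that every coefficient of the resulting polynomial in $s$ (each a polynomial in $d$) is nonnegative for $d\ge 6$, which gives $P_{10}'\ge 0$ on $[\,d-\frac{5}{d+1},\,\infty\,)$ — or, more crudely, by exhibiting an explicit threshold $x_0<d-\frac{5}{d+1}$ past which $P_{10}'>0$. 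Everything else reduces to routine polynomial arithmetic in $d$.
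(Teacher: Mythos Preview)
Your lower bound is exactly the paper's: apply the contrapositive of Theorem~\ref{sigma3} to $\mathcal{H}_d$, whose five-part partition shows $\sigma(\mathcal{H}_d)<3$, hence $\gamma_d=\lambda_2(\mathcal{H}_d)\ge d-\tfrac{5}{d+1}$.

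For the upper bound your plan is correct and very close to the paper's, but the paper packages the computation more economically. You propose verifying separately (a) $P_{10}\bigl(d-\tfrac{5}{d+3}\bigr)>0$ and (b) $P_{10}'>0$ on $[\,d-\tfrac{5}{d+1},\infty)$, the latter by expanding $P_{10}'$ around $d-\tfrac{5}{d+1}$ and checking each Taylor coefficient is nonnegative in $d$. The paper instead verifies (via Mathematica) that $P_{10}^{(n)}\bigl(d-\tfrac{5}{d+3}\bigr)>0$ for every $n=0,1,\dots,10$; equivalently, the shifted polynomial $q(s)=P_{10}\bigl(s+d-\tfrac{5}{d+3}\bigr)$ has all positive coefficients, so by Descartes' rule $q$ has no positive root and hence $P_{10}$ has no root exceeding $d-\tfrac{5}{d+3}$. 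This single battery of sign checks at the \emph{upper} endpoint already contains your (a) as its $n=0$ case, delivers a stronger form of (b), and makes both the appeal to $P_{10}\bigl(d-\tfrac{5}{d+1}\bigr)\le 0$ and the separate observation that $P_{10}>0$ on $[d,\infty)$ unnecessary. (Incidentally, note that ``$\gamma_d\ge d-\tfrac{5}{d+1}$'' does not by itself give $P_{10}\bigl(d-\tfrac{5}{d+1}\bigr)\le 0$, since the second-largest root of $P_{10}$ could in principle also lie to the right; fortunately your upper-bound argument does not actually need that sign evaluation.) In short: same strategy, with the paper choosing to expand $P_{10}$ at $d-\tfrac{5}{d+3}$ rather than $P_{10}'$ at $d-\tfrac{5}{d+1}$, which is the tidier choice.
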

\begin{proof}
The lower bound follows directly from Theorem \ref{sigma3} as $\sigma(\mathcal{H}_d)<3$. Moreover, by some technical calculations (done in Mathematica and included in Section \ref{25quotient})
$$P_{10}^{(n)}\left(d-\frac{5}{d+3}\right)>0, \text{for } n=0,1,...,10.$$
Descartes' Rule of Signs implies $\gamma_{d}<d-\frac{5}{d+3}$. Hence,
\begin{equation}\label{theta2d}
d-\frac{5}{d+1}\le\gamma_d<d-\frac{5}{d+3}
\end{equation}
for every $d\geq 6$.
\end{proof}

%\section{More on eigenvalues and edge-connectivity of regular graphs}\label{examples}

%Cioab\u{a} \cite{Ci} proved that if $2\leq r\leq d$ and $G$ is a $d$-regular graph such that $\lambda_2(G)<d-\frac{2(r-1)}{d+1}$, then $\kappa'(G)\geq r$. In this section, we show that this eigenvalue bound is essentially best possible by constructing $d$-regular graph $H$ such that $\lambda_2(H)\in \left(d-\frac{2(r-1)}{d+2}, d-\frac{2(r-1)}{d+3}\right)$ and $\kappa'(H)=r-1$.

%Let $d\geq k\geq 2$ be two integers with $k$ even. Denote by $K_{d-k+1}\vee \overline{M_k}$ the join between the complete graph $K_{d-k+1}$ and the complement of a perfect matching on $k$ vertices. Make two disjoint copies of $K_{d-k+1}\vee \overline{M_k}$ and add a matching of size $k$ between the vertices of degree $d-1$ in these copies. Denote this graph by $\mathcal{X}(d,k)$. This graph is $d$-regular, has $2(d+1)$ vertices and edge-connectivity $k$.
%\begin{proposition}
%The graph $\mathcal{X}(d,r-1)$ is $d$-regular, has edge-connectivity $r-1$ and its second largest eigenvalue equals 
%$$
%\frac{d-3+\sqrt{(d+3)^2-8k}}{2}\in \left(d-\frac{2k}{d+1},d-\frac{2k}{d+3}\right).
%$$
%\end{proposition}

\section{Final Remarks}

In this paper, we studied the relations between the eigenvalues of a regular graph and its spanning tree packing number. Based on the results contained in this paper, we make the following conjecture. 
\begin{conj}
Let $d\geq 8$ and $4\leq k\leq \lfloor \frac{d}{2} \rfloor$ be two integers. If $G$ is a $d$-regular graph such that $\lambda_2(G)<d-\frac{2k-1}{d+1}$, then $G$ contains at least $k$ edge-disjoint spanning trees.
\end{conj}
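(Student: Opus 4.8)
The plan is to prove the contrapositive, following and generalizing the arguments used for $k\in\{2,3\}$. Assume $G$ is $d$-regular and has no $k$ edge-disjoint spanning trees; the goal is $\lambda_2(G)\ge d-\frac{2k-1}{d+1}$. By Nash-Williams/Tutte Theorem there is a partition $V(G)=X_1\cup\dots\cup X_t$ with $\sum_{i<j}e(X_i,X_j)\le k(t-1)-1$, and hence, with $r_i=e(X_i,V\setminus X_i)$, $\sum_{i=1}^t r_i\le 2k(t-1)-2$. Equivalently $\sum_{i=1}^t(2k-r_i)\ge 2k+2$, so there is a ``deficiency'' of at least $2k+2$ spread over the parts. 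Since $2k-1\le d-1$ under the hypothesis $k\le\lfloor d/2\rfloor$, every part with $r_i\le 2k-1$ has $n_i:=|X_i|\ge d+1$ (the same elementary computation as in Section \ref{2tree:sec}); this will be used repeatedly.

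First I would dispatch the small-cut cases with the refined eigenvalue/edge-connectivity estimates of \cite{Ci}: if some union $U$ of parts has $e(U,V\setminus U)$ at most a threshold $s_0=s_0(k)$ that grows only linearly in $k$, then those estimates — which account for the actual sizes of the two sides of the cut, not merely $n_i\ge d+1$ — give $\lambda_2(G)\ge d-\frac{2k-1}{d+1}$ outright, exactly as in the recurring ``$r_i\le 2$'' and ``cut of size $2$'' sub-cases of Section \ref{3tree:sec}. Next, I would invoke \eqref{disjoint}: if two parts $X_i,X_j$ with $r_i,r_j\le 2k-1$ satisfy $e(X_i,X_j)=0$, then $\lambda_2(G)\ge\min(\overline d(X_i),\overline d(X_j))\ge\min\big(d-\tfrac{r_i}{n_i},\,d-\tfrac{r_j}{n_j}\big)\ge d-\frac{2k-1}{d+1}$, using $n_i,n_j\ge d+1$. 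Hence I may assume that the ``light'' parts (those with $r_i\le 2k-1$) are pairwise joined by at least one edge. A light part then sends at least $\ell-1$ edges to the other light parts, where $\ell$ is their number, so $\ell-1\le 2k-1$, i.e. $\ell\le 2k$; and since parts with $r_i\ge 2k$ contribute at most $0$ to the deficiency, the bound $\sum(2k-r_i)\ge 2k+2$ forces $\ell\ge 2$ and pins almost all of the deficiency onto these $\le 2k$ light parts.

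The core of the argument is then a merging step: I would show that the parts can be grouped into exactly three blocks $V(G)=Y_1\cup Y_2\cup Y_3$ so that, writing $m_j$ for the number of original parts inside $Y_j$, $n_j=|Y_j|$ and $s_j=e(Y_j,V\setminus Y_j)$, one has $n_j\ge m_j(d+1)$ and $\sum_{j=1}^3 \frac{s_j}{m_j}\le 2(2k-1)$. Granting this, the quotient matrix $B_3$ of $\{Y_1,Y_2,Y_3\}$ has largest eigenvalue $d$ (all-ones eigenvector, and $\le\lambda_1(G)=d$ by interlacing), trace $3d-\sum_j s_j/n_j$, and therefore its remaining two eigenvalues sum to $\tr(B_3)-d$, so the larger of them is at least $d-\tfrac12\sum_j s_j/n_j\ge d-\tfrac{1}{2(d+1)}\sum_j s_j/m_j\ge d-\frac{2k-1}{d+1}$. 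Haemers' interlacing for quotient matrices then gives $\lambda_2(G)\ge\lambda_2(B_3)\ge d-\frac{2k-1}{d+1}$, as desired. For $k\in\{2,3\}$ this is precisely what the matrices $A_3,B_3,C_3,D_3,E_3,F_3,G_3$ of Sections \ref{2tree:sec} and \ref{3tree:sec} do.

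The main obstacle is proving that such a balanced three-part partition always exists. One must first rule out, or separately handle via \cite{Ci}, parts of unbounded boundary (``heavy'' parts, and, when $d$ is even and $k=d/2$, possible singleton parts with $r_i=d$), then use the reductions above to bound $t$ by a function of $k$, and finally run a case analysis on the multiset of light-part boundaries $(r_i)$ — the analogue of the cases ``$a=0,1,2,3$'' and ``$a=0$; $b=0,\dots,5$'' — whose number of cases grows rapidly with $k$. Controlling this case explosion uniformly, especially near the $k$-part analogues of the extremal graphs $\mathcal G_d$ and $\mathcal H_d$ (where the two sides of the target inequality essentially coincide, leaving no slack in the merging step), is exactly the place where a clean general argument is still missing, which is why the statement is only conjectured.
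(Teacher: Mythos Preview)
The statement is a \emph{conjecture} in the paper, not a theorem; the paper offers no proof of it. Your write-up is likewise not a proof but a strategy sketch, and you say so yourself in the final paragraph. So there is nothing in the paper to compare your argument against at the level of a proof.

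That said, your outline is an accurate abstraction of exactly what the paper does for $k\in\{2,3\}$: contrapositive via Nash-Williams/Tutte, the deficiency count $\sum_i(2k-r_i)\ge 2k+2$, the observation $r_i\le 2k-1\Rightarrow n_i\ge d+1$, the two short-circuit mechanisms (small edge-cuts handled by the results of \cite{Ci}; two disjoint light parts handled by \eqref{disjoint}), and finally a three-block quotient matrix whose trace bound yields $\lambda_2(G)\ge d-\tfrac{1}{2}\sum_j s_j/n_j$. Your reduction of the target inequality to the combinatorial condition $\sum_{j=1}^3 s_j/m_j\le 2(2k-1)$, with $n_j\ge m_j(d+1)$, is the correct distillation of what the matrices $A_3,B_3,C_3,D_3,E_3,F_3,G_3$ are doing case by case.

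The genuine gap is exactly the one you name: you have not proved that a three-block merge satisfying $n_j\ge m_j(d+1)$ and $\sum_j s_j/m_j\le 2(2k-1)$ always exists once the short-circuit cases are excluded. For $k=2,3$ the paper verifies this by exhaustive case analysis, and several of those cases are already tight (equality in the trace bound), so there is no slack to absorb a coarser argument. Until that merging lemma is established uniformly in $k$, what you have is a plausible program, not a proof --- which is precisely why the paper records the statement as a conjecture.
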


Let $\omega(H)$ denote the number of components of the graph $H$. 
The vertex-toughness of $G$ is defined as $\min \frac{|S|}{\omega(G\setminus S)}$, where the minimum is taken over all subsets of vertices $S$ whose removal disconnects $G$. Alon \cite{Alon1} and independently, Brouwer \cite{Brouwer1} have found close relations between the eigenvalues of a regular graph and its vertex-toughness. These connections were used by Alon in \cite{Alon1} to disprove a conjecture of Chv\'{a}tal that a graph with sufficiently large vertex-toughness is pancyclic. For $c\geq 1$, the higher order edge-toughness $\tau_c(G)$ is defined as 
$$
\tau_c(G):=\min \frac{|X|}{\omega(G\setminus X)-c}
$$
where the minimum is taken over all subsets $X$ of edges of $G$ with the property $\omega(G\setminus X)>c$ (see Chen, Koh and Peng \cite{CKP} or Catlin, Lai and Shao \cite{CLS} for more details). The Nash-Williams/Tutte Theorem states that $\sigma(G)=\lfloor \tau_1(G)\rfloor$. Cunningham \cite{Cun} generalized this result and showed that if $\tau_1(G)\geq \frac{p}{q}$ for some natural numbers $p$ and $q$, then $G$ contains $p$ spanning trees (repetitions allowed) such that each edge of G lies in at most $q$ of the $p$ trees. Chen, Koh and Peng \cite{CKP} proved that $\tau_c(G)\geq k$ if and only if $G$ contains at least $c$ edge-disjoint forests with exactly $c$ components. It would be interesting to find connections between the eigenvalues of the adjacency matrix (or of the Laplacian) of a graph $G$ and $\tau_c(G)$. 

Another question of interest is to determine sufficient eigenvalue condition for the existence of {\em nice} spanning trees in pseudorandom graphs. A lot of work has been done on this problem in the case of random graphs (see Krivelevich \cite{Kri} for example).

%Can we prove anything regarding the higher order vertex-toughness ? %How does Brouwer's bound change ?
%Laplacian eigenvalues ? $0=\mu_1<\mu_2\geq \dots \geq \mu_n$. 
%There are two inequalities:
%Alon-Milman: $e(S,V\setminus S)\geq \mu_2\frac{|S|(n-|S|)}{n}$
%Haemers: if $X,Y$ are two disjoint subsets of vertices with $e(X,Y)=0$, then
%$$
%\frac{|X||Y|}{(n-|X|)(n-|Y|)}\leq \left(\frac{\mu_n-\mu_2}{\mu_n+\mu_2}\right)^2
%$$
%Gusfield introduced the edge-toughness of a graph reference , see also \cite{CLS}.

\section*{Acknowledgments} We thank the referee for some useful remarks.

\vskip 5ex

\section{Calculations for Lemma \ref{gammad}}\label{25quotient}

\subsection{Justify characteristic polynomial in 25 parts}
The following is the characteristic polynomial of the equitable partition in 25 parts:

\tiny{$\text{Factor}[\text{CharacteristicPolynomial}[\left(
\begin{array}{ccccccccccccccccccccccccc}
 d-4 & 0 & 0 & 0 & 0 & 1 & 1 & 1 & 1 & 0 & 0 & 0 & 0 & 0 & 0 & 0 & 0 & 0 & 0 & 0 & 0 & 0 & 0 & 0 & 0 \\
 0 & d-4 & 0 & 0 & 0 & 0 & 0 & 0 & 0 & 1 & 1 & 1 & 1 & 0 & 0 & 0 & 0 & 0 & 0 & 0 & 0 & 0 & 0 & 0 & 0 \\
 0 & 0 & d-4 & 0 & 0 & 0 & 0 & 0 & 0 & 0 & 0 & 0 & 0 & 1 & 1 & 1 & 1 & 0 & 0 & 0 & 0 & 0 & 0 & 0 & 0 \\
 0 & 0 & 0 & d-4 & 0 & 0 & 0 & 0 & 0 & 0 & 0 & 0 & 0 & 0 & 0 & 0 & 0 & 1 & 1 & 1 & 1 & 0 & 0 & 0 & 0 \\
 0 & 0 & 0 & 0 & d-4 & 0 & 0 & 0 & 0 & 0 & 0 & 0 & 0 & 0 & 0 & 0 & 0 & 0 & 0 & 0 & 0 & 1 & 1 & 1 & 1 \\
 d-3 & 0 & 0 & 0 & 0 & 0 & 1 & 0 & 1 & 0 & 0 & 0 & 0 & 0 & 0 & 0 & 0 & 0 & 0 & 0 & 0 & 0 & 1 & 0 & 0 \\
 d-3 & 0 & 0 & 0 & 0 & 1 & 0 & 1 & 0 & 1 & 0 & 0 & 0 & 0 & 0 & 0 & 0 & 0 & 0 & 0 & 0 & 0 & 0 & 0 & 0 \\
 d-3 & 0 & 0 & 0 & 0 & 0 & 1 & 0 & 1 & 0 & 0 & 0 & 0 & 0 & 0 & 0 & 1 & 0 & 0 & 0 & 0 & 0 & 0 & 0 & 0 \\
 d-3 & 0 & 0 & 0 & 0 & 1 & 0 & 1 & 0 & 0 & 0 & 0 & 0 & 0 & 0 & 0 & 0 & 0 & 0 & 1 & 0 & 0 & 0 & 0 & 0 \\
 0 & d-3 & 0 & 0 & 0 & 0 & 1 & 0 & 0 & 0 & 1 & 0 & 1 & 0 & 0 & 0 & 0 & 0 & 0 & 0 & 0 & 0 & 0 & 0 & 0 \\
 0 & d-3 & 0 & 0 & 0 & 0 & 0 & 0 & 0 & 1 & 0 & 1 & 0 & 1 & 0 & 0 & 0 & 0 & 0 & 0 & 0 & 0 & 0 & 0 & 0 \\
 0 & d-3 & 0 & 0 & 0 & 0 & 0 & 0 & 0 & 0 & 1 & 0 & 1 & 0 & 0 & 0 & 0 & 0 & 0 & 0 & 1 & 0 & 0 & 0 & 0 \\
 0 & d-3 & 0 & 0 & 0 & 0 & 0 & 0 & 0 & 1 & 0 & 1 & 0 & 0 & 0 & 0 & 0 & 0 & 0 & 0 & 0 & 0 & 0 & 1 & 0 \\
 0 & 0 & d-3 & 0 & 0 & 0 & 0 & 0 & 0 & 0 & 1 & 0 & 0 & 0 & 1 & 0 & 1 & 0 & 0 & 0 & 0 & 0 & 0 & 0 & 0 \\
 0 & 0 & d-3 & 0 & 0 & 0 & 0 & 0 & 0 & 0 & 0 & 0 & 0 & 1 & 0 & 1 & 0 & 1 & 0 & 0 & 0 & 0 & 0 & 0 & 0 \\
 0 & 0 & d-3 & 0 & 0 & 0 & 0 & 0 & 0 & 0 & 0 & 0 & 0 & 0 & 1 & 0 & 1 & 0 & 0 & 0 & 0 & 0 & 0 & 0 & 1 \\
 0 & 0 & d-3 & 0 & 0 & 0 & 0 & 1 & 0 & 0 & 0 & 0 & 0 & 1 & 0 & 1 & 0 & 0 & 0 & 0 & 0 & 0 & 0 & 0 & 0 \\
 0 & 0 & 0 & d-3 & 0 & 0 & 0 & 0 & 0 & 0 & 0 & 0 & 0 & 0 & 1 & 0 & 0 & 0 & 1 & 0 & 1 & 0 & 0 & 0 & 0 \\
 0 & 0 & 0 & d-3 & 0 & 0 & 0 & 0 & 0 & 0 & 0 & 0 & 0 & 0 & 0 & 0 & 0 & 1 & 0 & 1 & 0 & 1 & 0 & 0 & 0 \\
 0 & 0 & 0 & d-3 & 0 & 0 & 0 & 0 & 1 & 0 & 0 & 0 & 0 & 0 & 0 & 0 & 0 & 0 & 1 & 0 & 1 & 0 & 0 & 0 & 0 \\
 0 & 0 & 0 & d-3 & 0 & 0 & 0 & 0 & 0 & 0 & 0 & 1 & 0 & 0 & 0 & 0 & 0 & 1 & 0 & 1 & 0 & 0 & 0 & 0 & 0 \\
 0 & 0 & 0 & 0 & d-3 & 0 & 0 & 0 & 0 & 0 & 0 & 0 & 0 & 0 & 0 & 0 & 0 & 0 & 1 & 0 & 0 & 0 & 1 & 0 & 1 \\
 0 & 0 & 0 & 0 & d-3 & 1 & 0 & 0 & 0 & 0 & 0 & 0 & 0 & 0 & 0 & 0 & 0 & 0 & 0 & 0 & 0 & 1 & 0 & 1 & 0 \\
 0 & 0 & 0 & 0 & d-3 & 0 & 0 & 0 & 0 & 0 & 0 & 0 & 1 & 0 & 0 & 0 & 0 & 0 & 0 & 0 & 0 & 0 & 1 & 0 & 1 \\
 0 & 0 & 0 & 0 & d-3 & 0 & 0 & 0 & 0 & 0 & 0 & 0 & 0 & 0 & 0 & 1 & 0 & 0 & 0 & 0 & 0 & 1 & 0 & 1 & 0
\end{array}
\right),x]]$}

\begin{align*}
&(d-x) (-1+x) (1+x)^2 (3+x) (-5+5 d-d^2+5 x-13 d x+4 d^2 x+109 x^2-83 d x^2+14 d^2 x^2-21 \\
&x^3+57 d x^3-20 d^2 x^3-146 x^4+140 d x^4-29 d^2 x^4-70 x^5+18 d x^5+8 d^2 x^5+36 x^6-66 \\ 
&x^6+20 d^2 x^6+58 x^7-50 d x^7+8 d^2 x^7+30 x^8-16 d x^8+d^2 x^8+8 x^9-2 d x^9+x^{10} )^2
\end{align*}

%\subsection{Justify $P'_{10}\left(d-\frac{5}{d+2}\right)<0$}
%The following is $P'_{10}\left(d-\frac{5}{d+2}\right)$:

%$\text{Factor}\left[D\left[
%\begin{array}{c}
%-5+5 d-d^2+5 x-13 d x+4 d^2 x+109 x^2-83 d x^2\\
%+14 d^2 x^2-21 x^3+57 d x^3-20 d^2 x^3-146 x^4+\\
%140 d x^4-29 d^2 x^4-70 x^5+18 d x^5+8 d^2 x^5+36 x^6-\\
%66 d x^6+20 d^2 x^6+58 x^7-50 d x^7+8 d^2 x^7+\\
%30 x^8-16 d x^8+d^2 x^8+8 x^9-2 d x^9+x^{10}
%\end{array}
%,x\right]\text{/.}x\to d-5/(d+2)\right]$

%\bigskip
%$
%-\frac{\begin{array}{c}
%5 (227466-1679056 d+3110044 d^2-1300586 d^3-1124526 d^4+953526 d^5+141820 d^6-\\
%247705 d^7-12301 d^8+33065 d^9+1063 d^{10}-2791 d^{11}-161 d^{12}+166 d^{13}+34 d^{14}+2 d^{15})\end{array}}{(2+d)^9}
%$

%It suffices to show  
%$$227466-1679056 d+3110044 d^2-1300586 d^3-1124526 d^4+953526 d^5+141820 d^6-247705 d^7-12301 d^8+33065 d^9+1063 d^{10}-2791 d^{11}-161 d^{12}+166 d^{13}+34 d^{14}+2 d^{15}>0$$ 
%for $d\ge 6$.  

%Comparing the degree 1 through 6 terms,
%$$-1679056 d+3110044 d^2-1300586 d^3-1124526 d^4+953526 d^5+141820 d^6\ge -1679056 d+3110044 (6)d-1300586 d^3-1124526 d^4+953526(36) d^3+141820(36) d^4$$
%$$=16981208 d+33026350 d^3 + 3980994 d^4>0.$$

%Comparing the degree 7 through 14 terms,
%$$-247705 d^7-12301 d^8+33065 d^9+1063 d^{10}-2791 d^{11}-161 d^{12}+166 d^{13}+34 d^{14}$$
%$$\ge -247705 d^7-12301 d^8+33065(36) d^7+1063 (36)d^{8}-2791 d^{11}-161 d^{12}+166(36) d^{11}+34(36)d^{12}$$
%$$=942635 d^7 + 25967 d^8 + 3185 d^{11} + 1063 d^{12}>0.$$

\subsection{Justify $P_{10}^{(n)}\left(d-\frac{5}{d+3}\right)>0, \text{for } n=0,1,...,10.$}

%n=0 case
\subsubsection{n=0}
$\text{Factor}\left[
\begin{array}{c}
-5+5 d-d^2+5 x-13 d x+4 d^2 x+109 x^2-83 d x^2+14 d^2 x^2-21 x^3+\\
57 d x^3-20 d^2 x^3-146 x^4+140 d x^4-29 d^2 x^4-70 x^5+18 d x^5+\\
8 d^2 x^5+36 x^6-66 d x^6+20 d^2 x^6+58 x^7-50 d x^7+8 d^2 x^7+\\
30 x^8-16 d x^8+d^2 x^8+8 x^9-2 d x^9+x^{10}
\end{array}
\text{/.}x\to d-5/(d+3)\right]$

$\frac{5 \left(209081+2789848 d+4225996 d^2-7988400 d^3-2586890 d^4+3149694 d^5+1156227 d^6-317856 d^7-185275 d^8-9630 d^9+7239 d^{10}+1412 d^{11}+79 d^{12}\right)}{(3+d)^{10}}$

\bigskip

Looking at the numerator,

$$209081+2789848 d+4225996 d^2-7988400 d^3-2586890 d^4+3149694 d^5+1156227 d^6-317856 d^7-185275 d^8-9630 d^9+7239 d^{10}+1412 d^{11}+79 d^{12}$$
$$\geq  209081+2789848 d+4225996 d^2-7988400 d^3-2586890 d^4+3149694 \left(6^2\right)d^3+1156227\left(6^2\right) d^4-317856 d^7-185275 d^8-9630 d^9+7239 \left(6^3\right)d^7+1412 \left(6^3\right)d^8+79 \left(6^3\right)d^9$$
$$=209081+2789848 d+4225996 d^2+105400584 d^3+39037282 d^4+1245768 d^7+119717 d^8+7434 d^9>0.$$

%n=1 case
\subsubsection{n=1}

$\text{Apart}\left[\text{FullSimplify}\left[D\left[
\begin{array}{c}
-5+5 d-d^2+5 x-13 d x+4 d^2 x+109 x^2-83 d x^2+14 d^2 x^2-21 x^3+57 d x^3-\\
20 d^2 x^3-146 x^4+140 d x^4-29 d^2 x^4-70 x^5+18 d x^5+8 d^2 x^5+36 x^6-\\
66 d x^6+20 d^2 x^6+58 x^7-50 d x^7+8 d^2 x^7+30 x^8-16 d x^8+d^2 x^8+\\
8 x^9-2 d x^9+x^{10}
\end{array}
,x\right]\text{/.}x\to d-5/(d+3)\right]\right]$

$-154125-6265 d+9235 d^2-1605 d^3-80 d^4+40 d^5-\frac{19531250}{(3+d)^9}-\frac{56250000}{(3+d)^8}-\frac{43125000}{(3+d)^7}+\frac{14000000}{(3+d)^6}+\frac{26231250}{(3+d)^5}+\frac{250000}{(3+d)^4}-\frac{6723000}{(3+d)^3}-\frac{224000}{(3+d)^2}+\frac{981525}{3+d}$
\bigskip

Looking at the fraction terms,

$\text{Together}\left[-\frac{19531250}{(3+d)^9}-\frac{56250000}{(3+d)^8}-\frac{43125000}{(3+d)^7}+\frac{14000000}{(3+d)^6}+\frac{26231250}{(3+d)^5}+\frac{250000}{(3+d)^4}-\frac{6723000}{(3+d)^3}-\frac{224000}{(3+d)^2}+\frac{981525}{3+d}\right]$

$\frac{25 \left(121436221+368991216 d+491609352 d^2+377696288 d^3+179037720 d^4+52838632 d^5+9436692 d^6+933304 d^7+39261 d^8\right)}{(3+d)^9}$

The expression is positive.  The only concern now are the terms $-154125-6265 d+9235 d^2-1605 d^3-80 d^4+40 d^5$.  Direct calculations for $d=6$ and $7$ yield the values 1425 and 184220, respectively.  For $d\ge8$,
$$(-154125-6265 d+9235 d^2)-1605 d^3-80 d^4+40 d^5 $$
$$=9235d+(d-1)(9235)d-6265d-154125+80d^{4}+(d-2)(40)d^4-80d^4-1605d^3$$
$$\ge 9235 d + (7) (9235) (8) - 6265 d - 154125+ 80d^4+(6)(40)(8)d^3-80d^{4}-1605d^{3}$$
$$=  (1920-1605)d^{3}+(9235-6265)d+(517160-154125)>0.$$

\subsubsection{n=2}

$\text{Apart}\left[\text{FullSimplify}\left[D\left[
\begin{array}{c}
-5+5 d-d^2+5 x-13 d x+4 d^2 x+109 x^2-83 d x^2+\\
14 d^2 x^2-21 x^3+57 d x^3-20 d^2 x^3-146 x^4+\\
140 d x^4-29 d^2 x^4-70 x^5+18 d x^5+8 d^2 x^5+\\
36 x^6-66 d x^6+20 d^2 x^6+58 x^7-50 d x^7+8 d^2 x^7+\\
30 x^8-16 d x^8+d^2 x^8+8 x^9-2 d x^9+x^{10}
\end{array}
,\{x,2\}\right]\text{/.}x\to d-5/(d+3)\right]\right]$

$-501172+218908 d-37582 d^2-2480 d^3+2472 d^4-344 d^5-60 d^6+16 d^7+2 d^8+\frac{35156250}{(3+d)^8}+\frac{90000000}{(3+d)^7}+\frac{54750000}{(3+d)^6}-\frac{30800000}{(3+d)^5}-\frac{34412500}{(3+d)^4}+\frac{4300000}{(3+d)^3}+\frac{8668800}{(3+d)^2}-\frac{574400}{3+d}$
\bigskip

Looking at the fraction terms and $2d^{8}$,
\bigskip

$\text{Together}\left[\frac{35156250}{(3+d)^8}+\frac{90000000}{(3+d)^7}+\frac{54750000}{(3+d)^6}-\frac{30800000}{(3+d)^5}-\frac{34412500}{(3+d)^4}+\frac{4300000}{(3+d)^3}+\frac{8668800}{(3+d)^2}-\frac{574400}{3+d}+2d^{8}\right]$

$\frac{1}{(3+d)^8}2 \left(\begin{array}{c}
1643568075+3659898600 d+3340851900 d^2+1497989000 d^3+328783750 d^4+25888400 d^5-\\
1696800 d^6-287200 d^7+6561 d^8+17496 d^9+20412 d^{10}+13608 d^{11}+5670 d^{12}+1512 d^{13}\\
+252 d^{14}+24 d^{15}+d^{16}\end{array}\right)$
\bigskip

By comparing terms, the expression is positive.  The only concern now are the terms $-501172+218908 d-37582 d^2-2480 d^3+2472 d^4-344 d^5-60 d^6+16 d^7$.   Direct calculations for $d=6$ and $7$ yield the values 1132028 and 4610438, respectively.  Clearly we have for the first two terms that $-501172+218908d>0$.   Now assume $d\ge 8$.  Looking at the next  three terms,
$$-37582 d^2-2480 d^3+2472 d^4= 4944d^3+(d-2)(2472)d^3-2480d^3-37583d^2$$
$$\geq  4944d^3+(6)(2472)(8)d^2-2480d^3-37583d^2>0$$

For the final three terms,
$$-344 d^5-60 d^6+16 d^7=64d^6+16(d-4)d^6-60d^6-344d^5\geq  64d^6+16(4)(8)d^5-60d^6-344d^5>0.$$

\subsubsection{n=3}

$\text{Apart}\left[\text{FullSimplify}\left[D\left[
\begin{array}{c}
-5+5 d-d^2+5 x-13 d x+4 d^2 x+109 x^2-83 d x^2+\\
14 d^2 x^2-21 x^3+57 d x^3-20 d^2 x^3-146 x^4+\\
140 d x^4-29 d^2 x^4-70 x^5+18 d x^5+8 d^2 x^5+\\
36 x^6-66 d x^6+20 d^2 x^6+58 x^7-50 d x^7+8 d^2 x^7+\\
30 x^8-16 d x^8+d^2 x^8+8 x^9-2 d x^9+x^{10}
\end{array}
,\{x,3\}\right]\text{/.}x\to d-5/(d+3)\right]\right]$

$2377554-293322 d-71280 d^2+40944 d^3-5340 d^4-1380 d^5+336 d^6+48 d^7-\frac{56250000}{(3+d)^7}-\frac{126000000}{(3+d)^6}-\frac{56700000}{(3+d)^5}+\frac{50400000}{(3+d)^4}+\frac{35947500}{(3+d)^3}-\frac{10020000}{(3+d)^2}-\frac{8360520}{3+d}$
\bigskip

Looking at the fraction terms and $48d^{7}$,
\bigskip

$\text{Together}\left[-\frac{56250000}{(3+d)^7}-\frac{126000000}{(3+d)^6}-\frac{56700000}{(3+d)^5}+\frac{50400000}{(3+d)^4}+\frac{35947500}{(3+d)^3}-\frac{10020000}{(3+d)^2}-\frac{8360520}{3+d}+48d^7\right]$

$\frac{1}{(3+d)^7}12 \left(\begin{array}{c}
-433473465-955900680 d-877113900 d^2-411225900 d^3-103585225 d^4-\\
13375780 d^5-696710 d^6+8748 d^7+20412 d^8+20412 d^9+11340 d^{10}+\\
3780 d^{11}+756 d^{12}+84 d^{13}+4 d^{14}\end{array}\right)$
\bigskip

%%%%
Looking at the numerator,

$$-433473465-955900680 d-877113900 d^2-411225900 d^3-103585225 d^4-13375780 d^5-696710 d^6+8748 d^7+20412 d^8+20412 d^9+11340 d^{10}+3780 d^{11}+756 d^{12}+84 d^{13}+4 d^{14}$$
$$\ge -433473465-955900680 d-877113900 d^2-411225900 d^3-103585225 d^4-13375780 d^5-696710 d^6+8748 d^7+20412\left(6^8\right)$$
$$+20412 \left(6^8\right)d+11340 \left(6^8\right)d^2+3780 \left(6^8\right)d^3+756\left(6^8\right) d^4+84 \left(6^8\right)d^5+4 \left(6^8\right)d^6$$
$$=33850848327+33328421112 d+18169731540 d^2+5937722580 d^3+1166204471 d^4+127711964 d^5+6021754 d^6+8748 d^7>0.$$

The only concern now are the terms $2377554-293322 d-71280 d^2+40944 d^3-5340 d^4-1380 d^5+336 d^6$.    Direct calculations for $d=6$ and $7$ yield the values 4920342 and 14390436, respectively.  We ignore the first positive constant, and assume $d\ge 8$.  Looking at the next three terms,
$$-293322 d - 71280 d^2 + 40944 d^3 =  81888 d^2 + (d - 2) (40944) d^2 - 71280 d^2 - 293322 d $$
$$\ge 81888 d^2 + (6) (40944) (8) d - 71280 d^2 - 293322 d>0$$

For the final three terms,
$$-5340 d^4 - 1380 d^5 + 336 d^6 = 1680 d^5 + (d - 5) 336 d^5 - 1380 d^5 - 5340 d^4$$ 
$$\ge1680 d^5 + (3) 336 (8) d^4 - 1380 d^5 - 5340 d^4 > 0.$$

\subsubsection{n=4}

$\text{Apart}\left[\text{FullSimplify}\left[D\left[
\begin{array}{c}-5+5 d-d^2+5 x-13 d x+4 d^2 x+109 x^2-83 d x^2+14 d^2 x^2-21 x^3+\\
57 d x^3-20 d^2 x^3-146 x^4+140 d x^4-29 d^2 x^4-70 x^5+18 d x^5+8 d^2 x^5+\\
36 x^6-66 d x^6+20 d^2 x^6+58 x^7-50 d x^7+8 d^2 x^7+30 x^8-16 d x^8+d^2 x^8+\\
8 x^9-2 d x^9+x^{10}\end{array},\{x,4\}\right]\text{/.}x\to d-5/(d+3)\right]\right]$

$-285504-1017840 d+396024 d^2-41280 d^3-18000 d^4+4032 d^5+672 d^6+\frac{78750000}{(3+d)^6}+\frac{151200000}{(3+d)^5}+\frac{44100000}{(3+d)^4}-\frac{63840000}{(3+d)^3}-\frac{28026000}{(3+d)^2}+\frac{13488000}{3+d}$
\bigskip

Looking at the fraction terms and $672d^6$,
\bigskip

$\text{Together}\left[672 d^6+\frac{78750000}{(3+d)^6}+\frac{151200000}{(3+d)^5}+\frac{44100000}{(3+d)^4}-\frac{63840000}{(3+d)^3}-\frac{28026000}{(3+d)^2}+\frac{13488000}{3+d}\right]$

$\frac{48 \left(4438500+23499000 d+33289500 d^2+16953500 d^3+3631125 d^4+281000 d^5+10206 d^6+20412 d^7+17010 d^8+7560 d^9+1890 d^{10}+252 d^{11}+14 d^{12}\right)}{(3+d)^6}$

This expression is positive.  The only concern now are the terms $-285504-1017840 d+396024 d^2-41280 d^3-18000 d^4+4032 d^5$.  Direct calculations for $d=6$ and 7 yield the values 6972672 and 22383576, respectively.  Now assume $d\ge 8$.  Looking at the first 3 terms,
$$-285504-1017840 d+396024 d^2=1188072d+(d-3)396024 d-1017840 d-285504\geq  1188072d+(5)396024 (8)-1017840 d-285504>0.$$

For the final three terms,
$$-41280 d^3-18000 d^4+4032 d^5=20160d^4+(d-5)4032 d^4-18000 d^4-41280 d^3\geq  20160d^4+(3)4032 (8)d^3-18000 d^4-41280 d^3>0.$$

\subsubsection{n=5}

$\text{Apart}\left[\text{FullSimplify}\left[D\left[
\begin{array}{c}-5+5 d-d^2+5 x-13 d x+4 d^2 x+109 x^2-83 d x^2+14 d^2 x^2-21 x^3+\\
57 d x^3-20 d^2 x^3-146 x^4+140 d x^4-29 d^2 x^4-70 x^5+18 d x^5+8 d^2 x^5+\\
36 x^6-66 d x^6+20 d^2 x^6+58 x^7-50 d x^7+8 d^2 x^7+30 x^8-16 d x^8+d^2 x^8+\\
8 x^9-2 d x^9+x^{10}\end{array},\{x,5\}\right]\text{/.}x\to d-5/(d+3)\right]\right]$

$-8576400+2476080 d-152400 d^2-162000 d^3+33600 d^4+6720 d^5-\frac{94500000}{(3+d)^5}-\frac{151200000}{(3+d)^4}-\frac{20160000}{(3+d)^3}+\frac{61824000}{(3+d)^2}+\frac{14024400}{3+d}$

\bigskip

Looking at the fraction terms,
\bigskip

$\text{Together}\left[-\frac{94500000}{(3+d)^5}-\frac{151200000}{(3+d)^4}-\frac{20160000}{(3+d)^3}+\frac{61824000}{(3+d)^2}+\frac{14024400}{3+d}\right]$

$\frac{1200 \left(1729737+2426436 d+1077978 d^2+191764 d^3+11687 d^4\right)}{(3+d)^5}$

This expression is positive.  The only concern now are the terms $-8576400+2476080 d-152400 d^2-162000 d^3+33600 d^4+6720 d^5$.  Clearly for the first two terms we have $-8576400+2476080 d>0$ for $d\ge 6$.  Looking at the four remaining terms,

$$-152400 d^2-162000 d^3+33600 d^4+6720 d^5\geq  -152400 d^2-162000 d^3+33600(36) d^2+6720(36) d^3= -152400 d^2-162000 d^3+1209600 d^2+241920 d^3>0.$$

\subsubsection{n=6}

$\text{Apart}\left[\text{FullSimplify}\left[D\left[
\begin{array}{c}-5+5 d-d^2+5 x-13 d x+4 d^2 x+109 x^2-83 d x^2+14 d^2 x^2-21 x^3+\\
57 d x^3-20 d^2 x^3-146 x^4+140 d x^4-29 d^2 x^4-70 x^5+18 d x^5+8 d^2 x^5+\\
36 x^6-66 d x^6+20 d^2 x^6+58 x^7-50 d x^7+8 d^2 x^7+30 x^8-16 d x^8+d^2 x^8+\\
8 x^9-2 d x^9+x^{10}\end{array},\{x,6\}\right]\text{/.}x\to d-5/(d+3)\right]\right]$

$9349920+244800 d-1044000 d^2+201600 d^3+50400 d^4+\frac{94500000}{(3+d)^4}+\frac{120960000}{(3+d)^3}-\frac{3024000}{(3+d)^2}-\frac{43545600}{3+d}$

\bigskip

Looking at the fraction terms and $50400 d^4+9349920+244800 d$,
\bigskip

$\text{Together}\left[\frac{94500000}{(3+d)^4}+\frac{120960000}{(3+d)^3}-\frac{3024000}{(3+d)^2}-\frac{43545600}{3+d}+50400 d^4+9349920+244800 d\right]$

$\frac{1440 \left(8178-30066 d+94722 d^2+56856 d^3+11368 d^4+3950 d^5+1890 d^6+420 d^7+35 d^8\right)}{(3+d)^4}$

This expression is clearly positive for $d\ge 6$.  The only terms left are $-1044000 d^2+201600 d^3$, and we get
$$-1044000 d^2+201600 d^3\ge -1044000 d^2+201600(6) d^2\ge-1044000d^2+1209600d^2 > 0.$$

\subsubsection{n=7}

$\text{Apart}\left[\text{FullSimplify}\left[D\left[
\begin{array}{c}-5+5 d-d^2+5 x-13 d x+4 d^2 x+109 x^2-83 d x^2+14 d^2 x^2-21 x^3+\\
57 d x^3-20 d^2 x^3-146 x^4+140 d x^4-29 d^2 x^4-70 x^5+18 d x^5+8 d^2 x^5+\\
36 x^6-66 d x^6+20 d^2 x^6+58 x^7-50 d x^7+8 d^2 x^7+30 x^8-16 d x^8+d^2 x^8+\\
8 x^9-2 d x^9+x^{10}\end{array},\{x,7\}\right]\text{/.}x\to d-5/(d+3)\right]\right]$

$5937120-4687200 d+846720 d^2+282240 d^3-\frac{75600000}{(3+d)^3}-\frac{72576000}{(3+d)^2}+\frac{13305600}{3+d}$

\bigskip

Looking at the fraction terms and $8282240 d^3$,
\bigskip

$\text{Together}\left[8282240 d^3-\frac{75600000}{(3+d)^3}-\frac{72576000}{(3+d)^2}+\frac{13305600}{3+d}\right]$

$\frac{640 \left(-271215+11340 d+20790 d^2+349407 d^3+349407 d^4+116469 d^5+12941 d^6\right)}{(3+d)^3}$

This expression is clearly positive for $d\ge 6$.  The only remaining terms are $5937120-4687200 d+846720 d^2$.  We have
$$5937120-4687200 d+846720 d^2\ge 5937120-4687200 d+846720(6) d =5937120-4687200 d+ 5080320d>0.$$

\subsubsection{n=8}

$\text{Apart}\left[\text{FullSimplify}\left[D\left[
\begin{array}{c}-5+5 d-d^2+5 x-13 d x+4 d^2 x+109 x^2-83 d x^2+14 d^2 x^2-21 x^3+\\
57 d x^3-20 d^2 x^3-146 x^4+140 d x^4-29 d^2 x^4-70 x^5+18 d x^5+8 d^2 x^5+\\
36 x^6-66 d x^6+20 d^2 x^6+58 x^7-50 d x^7+8 d^2 x^7+30 x^8-16 d x^8+d^2 x^8+\\
8 x^9-2 d x^9+x^{10}\end{array},\{x,8\}\right]\text{/.}x\to d-5/(d+3)\right]\right]$

$-13305600+2257920 d+1128960 d^2+\frac{45360000}{(3+d)^2}+\frac{29030400}{3+d}$

At $d=6$, the value is 44670080.  Clearly the expression is increasing for $d\ge 6$, and hence always positive for $d\ge 6$.

\subsubsection{n=9}

$\text{Apart}\left[\text{FullSimplify}\left[D\left[
\begin{array}{c}-5+5 d-d^2+5 x-13 d x+4 d^2 x+109 x^2-83 d x^2+14 d^2 x^2-21 x^3+\\
57 d x^3-20 d^2 x^3-146 x^4+140 d x^4-29 d^2 x^4-70 x^5+18 d x^5+8 d^2 x^5+\\
36 x^6-66 d x^6+20 d^2 x^6+58 x^7-50 d x^7+8 d^2 x^7+30 x^8-16 d x^8+d^2 x^8+\\
8 x^9-2 d x^9+x^{10}\end{array},\{x,9\}\right]\text{/.}x\to d-5/(d+3)\right]\right]$

$2903040+2903040 d-\frac{18144000}{3+d}$

At $d=6$, the value is 18305280.  Clearly the expression is increasing for $d\ge 6$, and hence always positive for $d\ge 6$.  

\subsubsection{n=10}
The value will be $10! > 0.$

%Commenting out the Descartes' Rule proof

\begin{comment}
\vspace{.3in}

\begin{center}
\textbf{Justifying the upper bound using Descartes' Rule}
\end{center}

Descartes' Rule of Signs says that the number of positive roots of a polynomial cannot exceed the number of sign changes in the coefficients of the polynomial.  We show if 
$$P_{10}^{(n)}\left(d-\frac{5}{d+3}\right)>0, \text{ for } i=0,1,...,10,$$
then the largest root of $P_{10}(x)$ must occur before $d-\frac{5}{d+3}$.  
\begin{proof}
Let 
$$q(x) = P_{10}\left(x+d-\frac{5}{d+3}\right)=x^{10}+a_{9}x^{9}+...+a_{0}.$$
We shift the polynomial to the left.  $q(0)=P_{10}\left(d-\frac{5}{d+3}\right)$ is assumed to be strictly positive by assumption ($n=0$) i.e. the function value of $q(x)$ is \emph{positive} at $x=0$ already.  It suffices to show that $q(x)$ has no positive roots, so that the largest root of $P_{10}(x)$ must occur before $d-\frac{5}{d+3}$.  

We have
$$q(0) =a_{0}= P_{10}\left(d-\frac{5}{d+3}\right) >0$$
$$q'(0) = a_{1}=P_{10}'\left(d-\frac{5}{d+3}\right) >0$$
$$q''(0) = a_{2}=\frac{P_{10}''\left(d-\frac{5}{d+3}\right)}{2!} >0$$
$$\vdots$$
$$q^{(9)}(0) =a_{9}= \frac{P_{10}^{(9)}\left(d-\frac{5}{d+3}\right)}{9!} >0$$
$$q^{(10)}(0)=10! >0$$

Since $q(x)$ has no sign changes, an upper bound on the number of positive roots is 0. Therefore $P_{10}(x)$ has no positive roots beyond $d-\frac{5}{d+3}$.
\end{proof}
\end{comment}

%End commenting out the Descartes' Rule proof

\end{document}